\newtheorem{Theorem}{Theorem}[section]
\newtheorem{Lemma}[Theorem]{Lemma}
\newtheorem{Proposition}[Theorem]{Proposition}
\newtheorem{Corollary}[Theorem]{Corollary}
\theoremstyle{definition}
\newtheorem{Example}[Theorem]{Example}
\theoremstyle{remark}
\def\@thmcountersep{-}
\numberwithin{equation}{section}
\begin{document} 

\title{A refinement of the Conway-Gordon theorems}

\author{Ryo Nikkuni}
\address{Department of Mathematics, School of Arts and Sciences, Tokyo Woman's Christian University, 2-6-1 Zempukuji, Suginami-ku, Tokyo 167-8585, Japan}
\email{nick@lab.twcu.ac.jp}
\thanks{The author was partially supported by Grant-in-Aid for Young Scientists (B) (No. 21740046), Japan Society for the Promotion of Science.}

\subjclass{Primary 57M15; Secondary 57M25}

\date{}

\dedicatory{Dedicated to Professor Tohl Asoh for his 60th birthday}

\keywords{Spatial graph, Conway-Gordon theorem, Rectilinear spatial graph}

\begin{abstract}
In 1983, Conway-Gordon showed that for every spatial complete graph on $6$ vertices, the sum of the linking numbers over all of the constituent $2$-component links is congruent to $1$ modulo $2$, and for every spatial complete graph on $7$ vertices, the sum of the Arf invariants over all of the Hamiltonian knots is also congruent to $1$ modulo $2$. In this article, we give integral lifts of the Conway-Gordon theorems above in terms of the square of the linking number and the second coefficient of the Conway polynomial. As applications, we give alternative topological proofs of theorems of Brown-Ram{\'\i}rez Alfons{\'\i}n and Huh-Jeon for rectilinear spatial complete graphs which were proved by computational and combinatorial methods. 
\end{abstract}

\maketitle

\section{Introduction} 

Throughout this paper we work in the piecewise linear category. Let $f$ be an embedding of a finite graph $G$ into ${\mathbb R}^{3}$. Then $f$ (or $f(G)$) is called a {\it spatial embedding} of $G$ or simply a {\it spatial graph}. Two spatial embeddings $f$ and $g$ of $G$ are said to be {\it ambient isotopic} if there exists an orientation-preserving self homeomorphism $\Phi$ on ${\mathbb R}^{3}$ such that $\Phi\circ f=g$. We call a subgraph $\gamma$ of $G$ which is homeomorphic to the circle a {\it cycle} of $G$, and a cycle of $G$ which contains exactly $k$ edges a {\it $k$-cycle} of $G$. We denote the set of all cycles of $G$, the set of all $k$-cycles of $G$ and the set of all pairs of two disjoint cycles consisting of a $k$-cycle and an $l$-cycle of $G$ by $\Gamma(G)$, $\Gamma_{k}(G)$ and $\Gamma_{k,l}(G)$, respectively. For $\gamma\in \Gamma(G)$ (resp. $\lambda\in \Gamma_{k,l}(G)$) and a spatial embedding $f$ of $G$, $f(\gamma)$ (resp. $f(\lambda)$) is none other than a knot (resp. $2$-component link) in $f(G)$. 

Let $K_{n}$ be the {\it complete graph} on $n$ vertices, namely the graph consisting of $n$ vertices $1,2,\ldots,n$, a pair of whose vertices $i$ and $j$ is connected by exactly one edge $\overline{ij}$ if $i\neq j$. Let us recall the following two famous theorems in spatial graph theory, which are called the Conway-Gordon theorems. 

\begin{Theorem}\label{CG1} {\rm (Conway-Gordon \cite{CG83})}
For any spatial embedding $f$ of $K_{6}$, it follows that  
\begin{eqnarray}\label{CG_f1}
\sum_{\lambda\in \Gamma_{3,3}(K_{6})}{\rm lk}(f(\lambda)) \equiv 1 \pmod{2}, 
\end{eqnarray}
where ${\rm lk}$ denotes the {\it linking number} in ${\mathbb R}^{3}$. In particular, for any spatial embedding $f$ of $K_{6}$, there exists a pair of two disjoint $3$-cycles $\lambda$ of $K_{6}$ such that $f(\lambda)$ is a non-splittable $2$-component link with odd linking number. 
\end{Theorem}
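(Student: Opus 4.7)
The plan is to prove Theorem~\ref{CG1} in two steps: first, to show that the parity of
\[
S(f) := \sum_{\lambda \in \Gamma_{3,3}(K_{6})} {\rm lk}(f(\lambda))
\]
is independent of the spatial embedding $f$; second, to exhibit one explicit embedding $f_{0}$ of $K_{6}$ for which $S(f_{0}) \equiv 1 \pmod{2}$. The second conclusion of the theorem is immediate from the first: if every $f(\lambda)$ were split then every ${\rm lk}(f(\lambda))$ would vanish, forcing $S(f)$ to be even.

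For the parity invariance, I would use the standard fact that any two spatial embeddings of a finite graph are related by an ambient isotopy together with a finite sequence of crossing changes between pairs of edges. Since linking numbers are ambient isotopy invariants, it suffices to analyze a single crossing change between edges $e$ and $e'$ of $K_{6}$. Such a move alters ${\rm lk}(f(\lambda))$ by $\pm 1$ exactly when $\lambda = \{\gamma,\gamma'\} \in \Gamma_{3,3}(K_{6})$ satisfies $e \subset \gamma$ and $e' \subset \gamma'$, and leaves all other linking numbers fixed. So the mod-$2$ change in $S(f)$ equals the number, modulo $2$, of such pairs $\lambda$.

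This count is pure combinatorics. If $e$ and $e'$ share a vertex $v$, any cycles containing them both pass through $v$ and so cannot be disjoint, giving count $0$. If $e = \overline{ab}$ and $e' = \overline{cd}$ are disjoint and $\{x,y\}$ are the two remaining vertices of $K_{6}$, then the disjoint $3$-cycles containing them are exactly $\{\{a,b,x\},\{c,d,y\}\}$ and $\{\{a,b,y\},\{c,d,x\}\}$: two pairs. In every case the count is even, so $S(f) \bmod 2$ is an invariant of the abstract graph $K_{6}$.

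It remains to compute $S(f_{0}) \bmod 2$ for one concrete embedding. I would take a rectilinear embedding $f_{0}$ placing the six vertices in convex position in $\mathbb{R}^{3}$ (for instance at perturbed vertices of a regular octahedron) and joining each pair by a straight segment. For such embeddings each of the ten linking numbers is $0$ or $\pm 1$, determined purely by which straight triangles interleave with which straight segments, and a direct case inspection picks out exactly one pair $\lambda \in \Gamma_{3,3}(K_{6})$ with odd linking number. This last verification is the main obstacle: the invariance step is a clean parity count, but identifying the value mod $2$ requires checking all ten pairs on a carefully chosen model.
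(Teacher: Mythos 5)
Your proposal is correct in outline, and it is a genuinely different route from the one this paper takes: it is essentially Conway and Gordon's original argument. In the paper, Theorem \ref{CG1} is not proved directly; it is obtained as the mod~$2$ reduction of the integral identity of Theorem \ref{main1}, whose proof runs through the classification of spatial embeddings of $K_{6}$ up to spatial-graph homology (the normal form of Fig.~\ref{K6homology}), the Simon invariants of the $K_{5}$- and $K_{3,3}$-subgraphs, and the Motohashi--Taniyama formula $\alpha_{\omega}(f)=({\mathcal L}(f)^{2}-1)/8$ of Proposition \ref{homo2}~(1). Your route --- parity invariance of $\sum_{\lambda\in\Gamma_{3,3}(K_{6})}{\rm lk}(f(\lambda))$ under crossing changes, plus evaluation on one model --- is more elementary and self-contained, and your key count is right: for disjoint edges $\overline{ab}$, $\overline{cd}$ with remaining vertices $x,y$ exactly the two pairs $\{abx,cdy\}$ and $\{aby,cdx\}$ are affected, and no pair is affected for equal or adjacent edges, so each crossing change alters the sum by an even amount. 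What this elementary argument buys is simplicity; what it does not give is the integral refinement, which is precisely the point of the paper's heavier machinery. Two points to tighten before this is a complete proof: a generic deformation between two embeddings also involves self-crossings of a single edge, which you should note explicitly (they change no linking number, so they are harmless); and the base-case computation is genuinely required and not yet carried out --- beware that the regular octahedron has coplanar quadruples of vertices, so which pairs become linked (one or three, both odd, so either suffices for the parity) depends on the perturbation; a cleaner explicit model is the moment-curve embedding mentioned in the introduction or the standard hexagonal diagram of $K_{6}$, for which all ten linking numbers can be read off directly.
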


\begin{Theorem}\label{CG2} {\rm (Conway-Gordon \cite{CG83})}
For any spatial embedding $f$ of $K_{7}$, it follows that  
\begin{eqnarray}\label{CG_f2}
\sum_{\gamma\in \Gamma_{7}(K_{7})}{\rm Arf}(f(\gamma)) \equiv 1 \pmod{2}, 
\end{eqnarray}
where ${\rm Arf}$ denotes the {\it Arf invariant} \cite{Rober65}. In particular, for any spatial embedding $f$ of $K_{7}$, there exists a $7$-cycle $\gamma$ of $K_{7}$ such that $f(\gamma)$ is a non-trivial knot with Arf invariant one. 
\end{Theorem}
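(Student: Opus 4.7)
The plan is to show that
$\omega(f) := \sum_{\gamma \in \Gamma_{7}(K_{7})} \mathrm{Arf}(f(\gamma)) \pmod{2}$
is independent of the spatial embedding $f$, and then to evaluate $\omega$ on one explicit test embedding. Since any two spatial embeddings of $K_{7}$ are related by ambient isotopy together with finitely many crossing changes, and since a crossing between two edges sharing a vertex can be eliminated by a local isotopy around that vertex, it suffices to prove invariance under a single crossing change between two \emph{disjoint} edges $e_1 = \overline{ab}$ and $e_2 = \overline{cd}$.

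Suppose $f$ and $f'$ differ by such a crossing change. For a $7$-cycle $\gamma$ that fails to contain both $e_1$ and $e_2$ we have $f(\gamma) = f'(\gamma)$, while otherwise $f(\gamma)$ and $f'(\gamma)$ are related by a single crossing change of knots. Combining the congruence $\mathrm{Arf}(K) \equiv a_{2}(K) \pmod{2}$, where $a_{2}$ is the second coefficient of the Conway polynomial, with the Conway skein relation $a_{2}(K_{+}) - a_{2}(K_{-}) = \mathrm{lk}(L_{0})$, I obtain
\begin{equation*}
\omega(f) - \omega(f') \equiv \sum_{\gamma \in \Gamma_{7}(K_{7}),\ \gamma \supset e_1 \cup e_2} \mathrm{lk}\bigl(f(\mu_{\gamma})\bigr) \pmod{2},
\end{equation*}
where $\mu_{\gamma}$ is the $2$-component link obtained by smoothing $\gamma$ at the crossing of $f(e_1)$ with $f(e_2)$. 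Because attaching a homologically trivial $2$-cycle to either component does not affect the mod-$2$ linking number, each $\mu_{\gamma}$ may be replaced by a definite pair in some $\Gamma_{k,\ell}(K_{7})$ built from $\gamma$ together with one of the auxiliary edges $\overline{ad}$ or $\overline{bc}$.

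The main obstacle is the combinatorial identity that the resulting sum vanishes mod $2$. I would approach it by enumerating the Hamiltonian cycles of $K_{7}$ passing through the fixed pair $\{e_1, e_2\}$, tabulating the associated pairs $\mu_{\gamma}$ by cycle-type, and then invoking Theorem~\ref{CG1} on each of the seven $K_{6}$-subgraphs obtained by deleting one vertex at a time; the hope is that the various linking-number contributions pair off and cancel modulo $2$. Once invariance of $\omega$ is established, the proof concludes by evaluating $\omega(f_{0})$ for a convenient test embedding $f_{0}$ — for example, a rectilinear embedding on seven points in convex position, or a ``book'' embedding along a line — in which all but a short explicit list of Hamiltonian cycles are visibly unknotted, and the residual Arf invariants can be read off directly to give $\omega(f_{0}) = 1$. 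The ``in particular'' assertion is then immediate, since $\omega(f) \equiv 1$ forces at least one $\gamma \in \Gamma_{7}(K_{7})$ with $\mathrm{Arf}(f(\gamma)) = 1$.
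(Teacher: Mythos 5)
Your strategy is the classical Conway--Gordon crossing-change argument, which is genuinely different from the route taken in this paper: here Theorem \ref{CG2} is obtained simply by reducing the integral identity of Theorem \ref{main2} modulo two, and that identity is proved via Simon invariants and spatial graph homology, with no induction over crossing changes and no evaluation on a base embedding. As written, however, your proposal has genuine gaps, and they include the heart of the theorem. First, the reduction to crossing changes between disjoint edges is not justified: an ambient isotopy cannot ``eliminate'' a crossing (it does not change the embedding at all), and in a generic deformation from $f$ to $f'$ there unavoidably occur instants where an edge passes through itself or through an adjacent edge. Such crossing changes really do alter $\mathrm{Arf}(f(\gamma))$ for the $120$ (resp.\ $24$) Hamiltonian cycles containing the edge (resp.\ the two adjacent edges) --- for example, crossing changes between $\overline{ab}$ and $\overline{bc}$ alone can tie a local trefoil into the path $\overline{ab}\cup\overline{bc}$. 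Their net effect on $\omega$ is indeed even, but that requires an argument (the smoothed link has one fixed small component, and one must count that each edge of $K_{7}$ occurs in an even number of the relevant Hamiltonian paths); these cases cannot simply be discarded.

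Second, for the case you do keep --- disjoint edges --- the essential cancellation $\sum_{\gamma\supset e_{1}\cup e_{2}}\mathrm{lk}(f(\mu_{\gamma}))\equiv 0\pmod 2$ is explicitly left as a ``hope.'' This is exactly the nontrivial step of Conway and Gordon's proof: here both components of $\mu_{\gamma}$ vary with $\gamma$, the replacement of $\mu_{\gamma}$ by honest links in $\Gamma_{k,\ell}(K_{7})$ via the auxiliary edges introduces correction terms that must themselves be shown to cancel, and invoking Theorem \ref{CG1} on the $K_{6}$-subgraphs yields sums that are congruent to $1$, not $0$, so the proposed bookkeeping is delicate and is nowhere carried out. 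Third, the base-case evaluation $\omega(f_{0})=1$ is asserted rather than performed: for a convex-position or moment-curve embedding one must actually determine the knot types of all $360$ Hamiltonian cycles, and Conway and Gordon rely on a carefully chosen embedding containing exactly one knotted Hamiltonian cycle (a trefoil; compare Example \ref{ex}). Without the invariance argument completed in all cases and without this verification, the claim $\omega(f)\equiv 1$ is not established.
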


Conway-Gordon's formulas (\ref{CG_f1}) and (\ref{CG_f2}) hold only modulo two. Note that the square of the linking number is congruent to the linking number modulo two, and the second coefficient of the {\it Conway polynomial} of a knot is congruent to the Arf invariant modulo two \cite{Kauffman83}. Our first purpose in this article is to refine Conway-Gordon's formulas above by giving their integral lifts in terms of the square of the linking number and the second coefficient of the Conway polynomial as follows. 

\begin{Theorem}\label{main1} 
For any spatial embedding $f$ of $K_{6}$, we have that  
\begin{eqnarray*}
2\left\{
\sum_{\gamma\in \Gamma_{6}(K_{6})}a_{2}(f(\gamma))
-\sum_{\gamma\in \Gamma_{5}(K_{6})}a_{2}(f(\gamma))
\right\}
=
\sum_{\lambda\in \Gamma_{3,3}(K_{6})}{{\rm lk}(f(\lambda))}^{2}-1,
\end{eqnarray*}
where $a_{2}$ denotes the second coefficient of the Conway polynomial. 
\end{Theorem}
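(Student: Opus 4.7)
The plan is to follow the classical Conway--Gordon strategy: define the integer-valued functional
\[
\Phi(f) := 2\sum_{\gamma \in \Gamma_6(K_6)} a_2(f(\gamma)) - 2\sum_{\gamma \in \Gamma_5(K_6)} a_2(f(\gamma)) - \sum_{\lambda \in \Gamma_{3,3}(K_6)} {\rm lk}(f(\lambda))^2,
\]
prove that $\Phi$ is an invariant of the spatial embedding (that is, takes the same value for every $f$), and then compute $\Phi(f_0) = -1$ on one explicit embedding of $K_6$.

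Since any two spatial embeddings of a graph are related by a finite sequence of crossing changes and ambient isotopies, to establish invariance it suffices to show that $\Phi(f)$ is unchanged when a single crossing of $f(K_6)$ is switched. The local change is controlled by two standard skein tools: the skein relation $a_2(L_+) - a_2(L_-) = {\rm lk}(L_0)$ for the second Conway coefficient, and the elementary identity ${\rm lk}(L_+)^2 - {\rm lk}(L_-)^2 = 2\varepsilon\,{\rm lk}(L_-) + 1$, where $\varepsilon = \pm 1$ is determined by the sign of the switched crossing. Fix such a crossing between edges $e_1$ and $e_2$ of $K_6$. If $e_1$ and $e_2$ are disjoint, the ${\rm lk}^2$-sum changes by a total of $2\varepsilon\,{\rm lk}(f(\lambda)) + 1$ over those $\lambda = \gamma_1 \sqcup \gamma_2 \in \Gamma_{3,3}$ with $e_1 \subset \gamma_1$ and $e_2 \subset \gamma_2$, while each $a_2$ summand changes by $\pm {\rm lk}$ of the oriented smoothing of the corresponding cycle at the crossing. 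I would identify each such smoothed link with an explicit disjoint pair of subcycles in $K_6$ (possibly with a degenerate local component coming from the digon case), and then verify by enumeration of the 5-, 6-, and $(3,3)$-subgraphs of $K_6$ containing both $e_1$ and $e_2$ that the total change vanishes. If $e_1$ and $e_2$ share a vertex, then no disjoint pair of cycles contains both, so the ${\rm lk}^2$-sum is unaffected and one only checks that the two $a_2$-sums cancel by a similar enumeration.

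For the reference embedding I would take $f_0$ to be a standard (e.g.\ book) embedding of $K_6$ for which every $f_0(\gamma)$ is unknotted (so $a_2(f_0(\gamma)) = 0$ for every cycle) and in which all ten links in $\Gamma_{3,3}$ are split except for a single Hopf link, whose existence is guaranteed by Theorem~\ref{CG1}. Then $\sum_\lambda {\rm lk}(f_0(\lambda))^2 = 1$, and so $\Phi(f_0) = 0 - 1 = -1$, whence the theorem follows.

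The main obstacle is the combinatorial matching in the disjoint-edge case of crossing-change invariance. One must correctly identify the smoothed link of each 5- or 6-cycle of $K_6$ through $\{e_1, e_2\}$ as a specific element of $\Gamma(K_6)$, handle carefully the degenerate smoothings of 5-cycles in which one ``component'' becomes a local loop near the crossing and whose linking with the other component must be treated separately, and verify that the weighted sum of skein contributions from $\Gamma_5$ and $\Gamma_6$ precisely reproduces the linear part $2\varepsilon\sum_\lambda {\rm lk}(f(\lambda))$ while the constant part $+1$ from each affected $\lambda \in \Gamma_{3,3}$ cancels against the remaining count. This is a finite but delicate bookkeeping in the cycle structure of $K_6$.
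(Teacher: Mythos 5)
Your overall strategy---define $\Phi(f)=2\sum_{\Gamma_{6}}a_{2}-2\sum_{\Gamma_{5}}a_{2}-\sum_{\Gamma_{3,3}}\mathrm{lk}^{2}$, prove invariance under crossing changes, and evaluate on one reference embedding---is legitimate in principle and genuinely different from the paper's route, which uses no skein relations at all: there $f$ is normalized up to spatial graph-homology to the standard family of Fig.~\ref{K6homology}, the squared Simon invariants of the ten $K_{3,3}$- and six $K_{5}$-subgraphs are computed explicitly (Lemma \ref{Simon_lemma}), and the Motohashi--Taniyama identity $\alpha_{\omega}(f)=({\mathcal L}(f)^{2}-1)/8$ (Proposition \ref{homo2}(1)) converts these into the $a_{2}$-sums, the $\Gamma_{4}$-contributions cancelling. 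However, as written your proposal has a genuine gap: the whole content of the theorem sits in the crossing-change cancellation, and your plan for it does not go through in the form stated. When you smooth a $6$-cycle (or $5$-cycle) at a crossing between disjoint edges $e_{1},e_{2}$, each of the two resulting components contains an arc of $f(e_{1})$ \emph{and} an arc of $f(e_{2})$ and they meet near the crossing; they are not images of disjoint cycles of $K_{6}$, so there is no element of $\Gamma_{3,3}(K_{6})$ (or of $\Gamma(K_{6})$) with which to ``identify the smoothed link.'' To compare its linking number with $\mathrm{lk}(f(\lambda))$ for the two affected $\lambda\in\Gamma_{3,3}$ containing $e_{1}$ and $e_{2}$ you must decompose the smoothing components homologically into graph cycles and track the local $\pm1$ contributions at the crossing---this is precisely where the linear terms $2\varepsilon\,\mathrm{lk}(f(\lambda))$ and the constant have to be produced, and none of that bookkeeping is carried out or even set up correctly.

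There are two further gaps. First, you treat only crossings between distinct edges; a crossing change between two strands of the \emph{same} edge also changes the $a_{2}$-sums (the smoothing again involves arcs, not graph cycles), and this case is omitted, while in the shared-vertex case the asserted cancellation of the $\Gamma_{6}$- and $\Gamma_{5}$-contributions is stated without any argument---it needs the same homological analysis. Second, the base case is under-justified: Theorem \ref{CG1} guarantees that \emph{every} embedding contains a non-split link of odd linking number, but it does not provide an embedding with all $5$- and $6$-cycles unknotted and exactly one Hopf link; such an embedding exists (a standard book/linear embedding), but you must exhibit it and verify the ten linking numbers and the unknottedness of all relevant cycles explicitly. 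Until the cancellation lemma is actually proved, the proposal is a plausible program rather than a proof.
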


\begin{Theorem}\label{main2} 
For any spatial embedding $f$ of $K_{7}$, we have that   
\begin{eqnarray*}
&&7\sum_{\gamma\in \Gamma_{7}(K_{7})}a_{2}(f(\gamma))
-6\sum_{\gamma\in \Gamma_{6}(K_{7})}a_{2}(f(\gamma))
-2\sum_{\gamma\in \Gamma_{5}(K_{7})}a_{2}(f(\gamma))\\
&=& 2\sum_{\lambda\in \Gamma_{4,3}(K_{7})}{{\rm lk}(f(\lambda))}^{2}
-21.
\end{eqnarray*}
\end{Theorem}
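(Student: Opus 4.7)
The plan is to establish that the integer-valued quantity
\[
\omega(f) := 7\sum_{\gamma \in \Gamma_{7}(K_{7})} a_{2}(f(\gamma))
-6\sum_{\gamma \in \Gamma_{6}(K_{7})} a_{2}(f(\gamma))
-2\sum_{\gamma \in \Gamma_{5}(K_{7})} a_{2}(f(\gamma))
-2\sum_{\lambda \in \Gamma_{4,3}(K_{7})} {\rm lk}(f(\lambda))^{2}+21
\]
vanishes for every spatial embedding $f$ of $K_{7}$, by following the classical Conway--Gordon template: show that $\omega$ is invariant under an arbitrary crossing change between two edges of $K_{7}$, and then evaluate it on a single well-chosen embedding. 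Ambient-isotopy invariance of $\omega$ is automatic from that of $a_{2}$ and ${\rm lk}$, and any two spatial embeddings of $K_{7}$ are related by a finite sequence of crossing changes, so once invariance is established the problem reduces to a single numerical check.

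For the crossing-change step I would fix a crossing $c$ between two edges $e_{1}$ and $e_{2}$ of $K_{7}$ and apply two elementary identities. The Conway skein relation
\[
a_{2}(K_{+})-a_{2}(K_{-})={\rm lk}(L_{0})
\]
controls the change of $a_{2}(f(\gamma))$ for each $k$-cycle $\gamma$ containing both $e_{1}$ and $e_{2}$, where $L_{0}$ is the oriented smoothing of $f(\gamma)$ at $c$. The identity
\[
{\rm lk}(\lambda_{+})^{2}-{\rm lk}(\lambda_{-})^{2}=2\,{\rm lk}(\lambda_{-})+1
\]
controls the change of ${\rm lk}(f(\lambda))^{2}$ for each $\lambda \in \Gamma_{4,3}(K_{7})$ whose two components separate $e_{1}$ from $e_{2}$. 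Combining these, $\omega(f_{+})-\omega(f_{-})$ becomes a linear combination of linking numbers of smoothed sublinks and of $\Gamma_{4,3}$-pairs; the combinatorial task is to count, for each unordered pair of edges $\{e_{1},e_{2}\}$ of $K_{7}$ (treating the cases where they share a vertex and where they do not separately), how many cycles of each length $5,6,7$ pass through both and how many $\Gamma_{4,3}$-pairs separate them, and then to rewrite the smoothed linking numbers in terms of the ambient linking structure of $f(K_{7})$ so that the various contributions telescope to zero.

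As an intermediate tool, I would apply Theorem~\ref{main1} to each of the seven $K_{6}$-subgraphs of $K_{7}$ obtained by deleting a single vertex. Since every $6$-cycle of $K_{7}$ lies in exactly one such $K_{6}$, every $5$-cycle in exactly two, and every element of $\Gamma_{3,3}(K_{7})$ in exactly one, summing the seven copies of Theorem~\ref{main1} produces the auxiliary identity
\[
\sum_{\lambda \in \Gamma_{3,3}(K_{7})} {\rm lk}(f(\lambda))^{2}
= 2\sum_{\gamma \in \Gamma_{6}(K_{7})} a_{2}(f(\gamma))
- 4\sum_{\gamma \in \Gamma_{5}(K_{7})} a_{2}(f(\gamma)) + 7.
\]
This supplies the relation needed to absorb the $a_{2}$-contributions from smoothings of shorter cycles into the $\Gamma_{4,3}$-linking sum, and it explains the appearance of the constant $21=7\cdot 3$ on the right-hand side of the theorem.

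The last step is the numerical evaluation $\omega(f_{0}) = 0$ on a standard embedding $f_{0}$ of $K_{7}$, such as a minimal-crossing book embedding whose Hamiltonian knots and sublinks are tabulated explicitly; Theorem~\ref{CG2} already forces $\omega(f_{0})$ to be even, so only a finite integer check remains. The principal obstacle is the combinatorial bookkeeping in the crossing-change step: when a long cycle $\gamma$ of $K_{7}$ is smoothed at a crossing of $e_{1},e_{2}$, the two resulting closed components are not themselves cycles of the abstract graph $K_{7}$, so their linking number must be re-expressed as a signed combination of ${\rm lk}(f(\lambda))$ for $\lambda \in \Gamma_{3,3}(K_{7}) \cup \Gamma_{4,3}(K_{7})$ via the ambient configuration of $f(K_{7})$, and tracking the signs and multiplicities so that the coefficients $7,-6,-2,-2$ in the statement emerge as the unique solution of the resulting linear system is the most delicate part of the argument.
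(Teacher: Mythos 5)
Your plan has a genuine gap at its central step, and it is exactly the step that carries the entire content of the theorem. You reduce everything to showing that the quantity $\omega(f)$ is unchanged under a single crossing change between two edges $e_{1},e_{2}$ of $K_{7}$, but you do not carry this out: as you yourself note, when a $5$-, $6$- or $7$-cycle $\gamma$ through both edges is smoothed at the crossing, the two resulting closed curves are \emph{not} images of disjoint cycles of $K_{7}$, so the term ${\rm lk}(L_{0})$ produced by the skein relation $a_{2}(K_{+})-a_{2}(K_{-})={\rm lk}(L_{0})$ is not one of the quantities appearing in the formula, and there is no given mechanism for rewriting it as a signed combination of ${\rm lk}(f(\lambda))$ for $\lambda\in\Gamma_{3,3}(K_{7})\cup\Gamma_{4,3}(K_{7})$ (indeed ${\rm lk}(L_{0})$ depends on the local diagram at the crossing, not only on the homology of $f(K_{7})$; any honest cancellation argument must pair smoothings of different cycles against each other and against the change $2\,{\rm lk}(\lambda_{-})+1$ of the squared linking numbers, and must also handle self-crossings of a single edge and crossings of adjacent edges, where no linking number changes at all). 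Asserting that the contributions ``telescope to zero'' and that the coefficients $7,-6,-2,-2$ ``emerge as the unique solution of the resulting linear system'' is a statement of what needs to be proved, not a proof; without it the argument does not get off the ground, and the final evaluation on a single embedding is moot.

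For comparison, the paper avoids crossing changes for this theorem altogether. Your auxiliary identity obtained by summing Theorem~\ref{main1} over the seven vertex-deleted copies $K_{6}^{(i)}$ is correct (it is precisely Lemma~\ref{main3lemma}), but it only relates the $\Gamma_{3,3}$, $\Gamma_{6}$ and $\Gamma_{5}$ sums and can never produce the $\Gamma_{7}$ and $\Gamma_{4,3}$ terms. The paper's key extra idea is to apply Theorem~\ref{main1} also to the subgraphs $F_{ij}$ of $K_{7}$ that are \emph{homeomorphic} (not isomorphic) to $K_{6}$, obtained by deleting the edge $\overline{ij}$ and all edges $\overline{1k}$ with $k\neq i,j$, so that the vertex $1$ subdivides an edge; $7$-cycles of $K_{7}$ and $\Gamma_{4,3}$-pairs whose $4$-cycle passes through the subdividing vertex then appear as $6$-cycles and $\Gamma_{3,3}$-pairs of $F_{ij}$. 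Summing the resulting identities over the pairs $i<j$ and over the choice of the subdividing vertex, and counting multiplicities, yields the theorem by pure double counting, with no skein-theoretic bookkeeping. If you want to salvage your approach, you should either carry out the full cancellation analysis for the crossing-change variation (a substantial undertaking) or switch to the subdivision/double-counting route.
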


Note that Theorems \ref{CG1} and \ref{CG2} can be obtained from Theorems \ref{main1} and \ref{main2} respectively by taking the modulo two reduction. We also show that Theorem \ref{main2} can be divided into the following two formulas. 

\begin{Corollary}\label{main3} 
For any spatial embedding $f$ of $K_{7}$, we have that
\begin{eqnarray}
\label{cor2_1} &&
14\left\{
\sum_{\gamma\in \Gamma_{7}(K_{7})}a_{2}(f(\gamma))
-\sum_{\gamma\in \Gamma_{6}(K_{7})}a_{2}(f(\gamma))
\right\}
\\
\nonumber &=&
4\sum_{\lambda\in \Gamma_{4,3}(K_{7})}{{\rm lk}(f(\lambda))}^{2}
-\sum_{\lambda\in \Gamma_{3,3}(K_{7})}{{\rm lk}(f(\lambda))}^{2}
-35, \\
\label{cor2_2} &&7\left\{
\sum_{\gamma\in \Gamma_{7}(K_{7})}a_{2}(f(\gamma))
-2\sum_{\gamma\in \Gamma_{5}(K_{7})}a_{2}(f(\gamma))
\right\}\\
\nonumber &=&
2\sum_{\lambda\in \Gamma_{4,3}(K_{7})}{{\rm lk}(f(\lambda))}^{2}
+3\sum_{\lambda\in \Gamma_{3,3}(K_{7})}{{\rm lk}(f(\lambda))}^{2}
-42.
\end{eqnarray}
\end{Corollary}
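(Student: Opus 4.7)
The plan is to deduce Corollary~\ref{main3} from Theorem~\ref{main2} via one auxiliary identity extracted by applying Theorem~\ref{main1} to each of the seven $K_{6}$-subgraphs of $K_{7}$. Precisely, for each vertex $v\in \{1,2,\ldots,7\}$, let $K_{6}^{(v)}$ denote the $K_{6}$-subgraph of $K_{7}$ obtained by deleting $v$ and all its incident edges. Restricting $f$ to $K_{6}^{(v)}$ and invoking Theorem~\ref{main1} produces one identity; the strategy is simply to add up these seven identities.

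The combinatorial bookkeeping is immediate: a $5$-cycle of $K_{7}$ spans $5$ of the $7$ vertices, hence lies in exactly two of the subgraphs $K_{6}^{(v)}$, while a $6$-cycle or a pair $\lambda\in \Gamma_{3,3}(K_{7})$ spans $6$ vertices and therefore lies in exactly one such subgraph. Summing over $v$ consequently yields the auxiliary formula
\begin{eqnarray*}
2\sum_{\gamma\in \Gamma_{6}(K_{7})}a_{2}(f(\gamma))
-4\sum_{\gamma\in \Gamma_{5}(K_{7})}a_{2}(f(\gamma))
=
\sum_{\lambda\in \Gamma_{3,3}(K_{7})}{{\rm lk}(f(\lambda))}^{2}-7,
\end{eqnarray*}
which I shall denote by $(\ast)$.

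The corollary then follows by pure linear algebra. Subtracting $(\ast)$ from twice the identity in Theorem~\ref{main2} eliminates the $\Gamma_{5}(K_{7})$-sum and delivers formula~(\ref{cor2_1}); adding three times $(\ast)$ to the identity in Theorem~\ref{main2} eliminates the $\Gamma_{6}(K_{7})$-sum and delivers formula~(\ref{cor2_2}). Since no further topological ingredient is required beyond Theorems~\ref{main1} and~\ref{main2}, the only place where an error could realistically slip in is the combinatorial count of how many $K_{6}^{(v)}$'s contain a given cycle or disjoint cycle-pair, and this is entirely routine.
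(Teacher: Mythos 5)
Your proposal is correct and matches the paper's own argument: your auxiliary identity $(\ast)$ is exactly the paper's Lemma~\ref{main3lemma}, obtained there in the same way by applying Theorem~\ref{main1} to the seven subgraphs $K_{6}^{(i)}$ and summing, with the same multiplicities ($1$ for $6$-cycles and for $\Gamma_{3,3}$-pairs, $2$ for $5$-cycles). The paper then also deduces Corollary~\ref{main3} by combining this lemma with Theorem~\ref{main2}; your explicit linear combinations (twice Theorem~\ref{main2} minus $(\ast)$, and Theorem~\ref{main2} plus three times $(\ast)$) are correct and simply spell out what the paper leaves implicit.
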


Note that Theorem \ref{main2} can be recovered from (\ref{cor2_1}) and (\ref{cor2_2}). We give proofs of Theorems \ref{main1}, \ref{main2} and Corollary \ref{main3} in section $3$. We remark here that the second coefficient of the Conway polynomial and the square of the linking number are Vassiliev invariants \cite{Vass90} of order $2$ \cite{Natan95}, \cite{Murakami96}. See also \cite{OT01} for general results about Vassiliev invariants of knots and links in a spatial graph. 

Our second purpose in this article is to give applications of Theorems \ref{main1} and \ref{main2} to the theory of {\it rectilinear} spatial graphs. Here a spatial embedding $f$ of a graph $G$ is said to be rectilinear if for any edge $e$ of $G$, $f(e)$ is a straight line segment in ${\mathbb R}^{3}$. Note that for any positive integer $n$, there exists a rectilinear spatial embedding of $K_{n}$. Actually it can be constructed by taking $n$ vertices on the moment curve $(t,t^{2},t^{3})$ in ${\mathbb R}^{3}$ and connecting every pair of two distinct vertices by a straight line segment (this also implies that any simple graph has a rectilinear spatial embedding). Note that every knot or link contained in a rectilinear embedding of $K_{n}$ has {\it stick number} less than or equal to $n$, where the stick number $s(L)$ of a link (or a knot) $L$ is the minimum number of edges in a polygon which represents $L$. The following are fundamental results on stick numbers for knots and $2$-component links, see \cite{Negami91}, \cite{ABGW97}, \cite{Mater02}. 

\begin{Proposition}\label{stick} 
{\rm (1)} For any non-trivial knot $K$, it follows that $s(K)\ge 6$. Moreover, $s(K)=6$ if and only if $K$ is a trefoil knot, and $s(K)=7$ if and only if $K$ is a figure eight knot. 

\noindent
{\rm (2)} For a $2$-component link $L$, it follows that $s(L)\ge 6$. Moreover, $s(L)=6$ if and only if $L$ is either a trivial link or a Hopf link, and $s(L)=7$ if and only if $L$ is a $(2,4)$-torus link. 
\end{Proposition}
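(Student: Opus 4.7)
This proposition collects well-known results from the cited papers, so I would outline the approach rather than grind through the full case analysis. The argument splits naturally into three parts: lower bounds, exhibition of minimal examples, and uniqueness of the extremal configurations. The last part is where the real work lies.

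For the lower bounds I would observe that any polygonal knot with at most five sticks is trivial: a triangle is obviously unknotted; a quadrilateral is spanned by the disk obtained from either diagonal; and a pentagon is spanned by a disk obtained from a generic triangulation at one vertex. Hence a nontrivial polygonal knot uses at least six sticks, which proves the first half of (1). For (2) each of the two components is itself a polygonal knot and so uses at least three sticks, giving $s(L)\ge 6$.

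For the existence side I would exhibit concrete polygonal representatives: a hexagonal trefoil and a heptagonal figure-eight (both standard constructions), two triangles linked in the Hopf pattern, and a specific triangle-plus-quadrilateral configuration realizing the $(2,4)$-torus link. These give $s(\text{trefoil})\le 6$, $s(\text{figure-eight})\le 7$, $s(\text{Hopf})\le 6$, and $s((2,4)\text{-torus link})\le 7$ respectively.

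The main obstacle is the uniqueness direction at each stick count. For six-stick knots I would take a regular projection to obtain a hexagonal diagram, bound the number of crossings by the number of pairs of non-adjacent edges, and carry out a finite case analysis of the possible over/under patterns, using the straightness of the sticks to eliminate most configurations; only the unknot and the trefoil survive. For seven-stick knots the analogous enumeration, substantially longer and due to Negami and later refined by Calvo, leaves only the unknot, trefoil, and figure-eight. For two-triangle links the linking number equals the algebraic intersection of one triangle with a triangular spanning disk of the other, which is at most one in absolute value, so the link is either trivial or Hopf. Finally, the classification of seven-stick two-component links (triangle plus quadrilateral) is the most delicate step: a geometric enumeration of the possible crossing patterns, as carried out in Adams--Brennan--Greilsheimer--Woo and Mater, shows that any such link not already realizable with six sticks must be the $(2,4)$-torus link.
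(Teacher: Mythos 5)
The paper never proves this proposition: it is quoted as a collection of known facts with references to Negami, Adams--Brennan--Greilsheimer--Woo and Mater, so the only real question is whether your outline would deliver the statement. Your treatment of the lower bounds and of the existence of minimal representatives is in order, except for one small point: the cone (fan) disk over a pentagon from an arbitrarily chosen vertex need not be embedded --- an edge of a spatial pentagon can pierce one of the cone triangles --- so you should either argue that a suitable vertex exists or, more simply, project along the direction of one edge to get a diagram of the remaining four sticks with at most two crossings. Delegating the hexagonal and heptagonal knot classifications and the seven-stick link classification to Negami, Calvo, Adams--Brennan--Greilsheimer--Woo and Mater is exactly what the paper itself does, so no complaint there.

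The genuine gap is in the six-stick link case, the one classification you present as a self-contained argument. Showing that two disjoint triangles have linking number of absolute value at most one does not classify the link type: linking number $0$ does not imply the unlink (the Whitehead link has linking number $0$), and $|\mathrm{lk}|=1$ does not imply the Hopf link (a connected sum of a Hopf link with a knot on one component also has $|\mathrm{lk}|=1$). What is true, and what the later arguments of the paper actually need (e.g.\ the identification $n_{3,3}(2_{1}^{2})=\sum_{\lambda}\mathrm{lk}(f(\lambda))^{2}$ and the use of Proposition \ref{homo2} (2) in the proof of Theorem \ref{HJ}), is that two disjoint triangles form either the trivial link or the Hopf link. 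To get this you must use the geometry, not just the linking number: each component is a planar triangle bounding an embedded disk, the other triangle meets that plane transversally in $0$ or $2$ points, and a case analysis (if it misses the closed triangular disk, a neighbourhood of the disk gives a splitting ball and both components are unknots, hence the link is trivial; if it meets the disk in one of the two points one shows the link is Hopf; if in both, one isotopes to see it is again split) finishes the claim --- or simply cite this step from the same sources, as the paper does.
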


Therefore we have that non-trivial knots in a rectilinear spatial embedding of $K_{6}$ (resp. $K_{7}$) are only trefoil knots (resp. trefoil knots and figure eight knots), and non-trivial $2$-component links in a rectilinear spatial embedding of $K_{6}$ (resp. $K_{7}$) are only Hopf links (resp. Hopf links and $(2,4)$-torus links). Then, as a result for rectilinear spatial embeddings of $K_{7}$ corresponding to Theorem \ref{CG2}, the following is known. 

\begin{Theorem}\label{Alfon} {\rm (Brown \cite{B77}, Ram{\'\i}rez Alfons{\'\i}n \cite{RA99})} 
For any rectilinear spatial embedding $f$ of $K_{7}$, $f(K_{7})$ contains a trefoil knot.   
\end{Theorem}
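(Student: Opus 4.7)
I would argue by contradiction using Theorem~\ref{main2}. Assume that $f$ is a rectilinear embedding of $K_{7}$ whose image contains no trefoil knot. By Proposition~\ref{stick}~(1), every non-trivial knot realizable on at most seven sticks is a trefoil or a figure-eight; since the trefoil is excluded, $f(\gamma)$ is trivial for all $\gamma\in\Gamma_{5}(K_{7})\cup\Gamma_{6}(K_{7})$ (the figure-eight needs seven sticks), and for $\gamma\in\Gamma_{7}(K_{7})$ it is either trivial or a figure-eight knot $4_{1}$. Using $a_{2}(4_{1})=-1$ (since $\nabla_{4_{1}}(z)=1-z^{2}$), and letting $N$ denote the number of figure-eight $7$-cycles, the $\Gamma_{5}$- and $\Gamma_{6}$-sums in Theorem~\ref{main2} vanish and the identity collapses to
\[
7N + 2\sum_{\lambda\in\Gamma_{4,3}(K_{7})} \mathrm{lk}(f(\lambda))^{2} = 21.
\]
Nonnegativity of both summands bounds $N\le 3$, and the parity of the left side (also forced independently by Theorem~\ref{CG2}) pins $N$ down to $\{1,3\}$, with $\sum_{\Gamma_{4,3}}\mathrm{lk}^{2}$ equal to $7$ or $0$ correspondingly.

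In parallel, I would apply Theorem~\ref{main1} to each of the seven $K_{6}$-subgraphs $H=K_{7}\setminus\{v\}$. The no-trefoil hypothesis kills $\sum_{\Gamma_{6}(H)}a_{2}$ and the $\Gamma_{5}(H)$-sum is automatically zero, so Theorem~\ref{main1} reduces to $\sum_{\lambda\in\Gamma_{3,3}(H)} \mathrm{lk}(f(\lambda))^{2}=1$, i.e.\ exactly one Hopf link per $K_{6}$-subgraph. Since each element of $\Gamma_{3,3}(K_{7})$ sits inside a unique such $H$, summing over $v$ yields
\[
\sum_{\lambda\in\Gamma_{3,3}(K_{7})} \mathrm{lk}(f(\lambda))^{2} = 7.
\]
Feeding this value back into Corollary~\ref{main3} gives no new arithmetic information, so the equations extracted from Theorems~\ref{main1}, \ref{main2}, and Corollary~\ref{main3} collectively determine only the two arithmetic scenarios $(N,\sum_{\Gamma_{4,3}}\mathrm{lk}^{2})\in\{(1,7),(3,0)\}$.

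The main obstacle is ruling out these two residual cases, since they are internally consistent with every identity derivable from the main theorems; here the rectilinearity of $f$ has to do work beyond stick-number bookkeeping. My plan is to exploit the following incidence geometry: for each $K_{6}$-subgraph $H$, the unique Hopf link $C\cup D$ in $\Gamma_{3,3}(H)$ admits three natural extensions to a $\Gamma_{4,3}(K_{7})$-link, obtained by rerouting one edge of the triangle $C$ through the excluded vertex $v$, and the resulting change in linking number with $D$ is controlled by whether the two new straight sticks at $v$ pierce the triangle bounded by $D$ (which is a genuine planar disk because the edges are straight). Aggregating these piercing contributions over the seven pairs $(H,v)$ converts $\sum_{\Gamma_{4,3}}\mathrm{lk}^{2}$ into an incidence count that should either force additional $\Gamma_{4,3}$-linking (defeating the $(3,0)$ scenario) or force some $6$-cycle bordering such a piercing triangle to be knotted, hence a trefoil (defeating $(1,7)$). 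Isolating the correct incidence lemma, in which the straight-line hypothesis is essential, is the technical heart of the argument.
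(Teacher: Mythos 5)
Your reductions are correct as far as they go: under the no-trefoil hypothesis the stick-number constraints and Theorem~\ref{main2} do force $7N+2\sum_{\lambda\in\Gamma_{4,3}(K_{7})}{\rm lk}(f(\lambda))^{2}=21$ with $N\in\{1,3\}$, and applying Theorem~\ref{main1} to the seven $K_{6}$-subgraphs does give $\sum_{\lambda\in\Gamma_{3,3}(K_{7})}{\rm lk}(f(\lambda))^{2}=7$. But the proof stops exactly where it needs to succeed: the two residual scenarios $(N,\sum_{\Gamma_{4,3}}{\rm lk}^{2})\in\{(1,7),(3,0)\}$ are never ruled out. The concluding ``incidence geometry'' plan --- rerouting an edge of a Hopf-linked triangle through the deleted vertex and counting piercings of the planar disk bounded by the other triangle --- is only a program: no lemma is stated, no aggregation argument is carried out, and you yourself flag isolating the correct lemma as the unresolved technical heart. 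As written, this is a genuine gap, not a complete proof.

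The ingredient you are missing is not a new geometric incidence lemma but a purely topological counting result that the paper quotes: Lemma~\ref{K7link} (Fleming--Mellor), which says that \emph{every} spatial embedding of $K_{7}$ contains at least fourteen pairs in $\Gamma_{4,3}(K_{7})$ and seven pairs in $\Gamma_{3,3}(K_{7})$ with odd linking number. This gives $\sum_{\lambda\in\Gamma_{4,3}(K_{7})}{\rm lk}(f(\lambda))^{2}\ge 14$, which is incompatible with both of your residual values $7$ and $0$, so citing it would close your contradiction argument immediately. The paper itself proceeds without contradiction: combining Lemma~\ref{K7link} with (\ref{cor2_2}) it shows (Lemma~\ref{recti_key}) that $\sum_{\Gamma_{7}}a_{2}-2\sum_{\Gamma_{5}}a_{2}\ge 1$ for arbitrary embeddings, then uses rectilinearity only to kill the $\Gamma_{5}$-sum, concluding that some $7$-cycle has $a_{2}>0$ and hence, by Proposition~\ref{stick}, is a trefoil (Theorem~\ref{main4}). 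Note also that your hoped-for dichotomy in the $(1,7)$ case (``force some $6$-cycle to be knotted'') could not arise anyway, since under your hypothesis all $6$-cycles are already unknotted, so the burden falls entirely on producing extra $\Gamma_{4,3}$-linking --- which is precisely what Fleming--Mellor provides.
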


We remark here that Brown's paper \cite{B77} is unpublished, and Ram{\'\i}rez Alfons{\'\i}n's proof in \cite{RA99} is done by applying oriented matroid theory with the help of a computer. We refine Theorem \ref{Alfon} by an application of Theorems \ref{main1} and \ref{main2} as follows. 

\begin{Theorem}\label{main4} 
For any rectilinear spatial embedding $f$ of $K_{7}$, $\sum_{\gamma\in \Gamma_{7}(K_{7})}a_{2}(f(\gamma))$ is a positive odd integer. In particular, $\sum_{\gamma\in \Gamma_{7}(K_{7})}a_{2}(f(\gamma))=1$ if and only if the non-trivial $2$-component links in $f(K_{7})$ are exactly twenty one Hopf links. 
\end{Theorem}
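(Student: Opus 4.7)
The plan is to apply Theorems \ref{main1} and \ref{main2} together with the stick-number restrictions of Proposition \ref{stick}. Write $N_k := \sum_{\gamma \in \Gamma_k(K_7)} a_2(f(\gamma))$ and let $h$, $h_2$, $q$ denote the numbers of Hopf links in $\Gamma_{3,3}(K_7)$, Hopf links in $\Gamma_{4,3}(K_7)$, and $(2,4)$-torus links in $\Gamma_{4,3}(K_7)$, respectively. By Proposition \ref{stick}, every 5-cycle of $f(K_7)$ is trivial (so $N_5 = 0$), every 6-cycle is trivial or a trefoil (so $N_6 \ge 0$), every 7-cycle is trivial, a trefoil, or a figure-eight, every $\Gamma_{3,3}$-pair is either trivial or Hopf (so $\sum_{\Gamma_{3,3}} {\rm lk}(f(\lambda))^2 = h$), and every $\Gamma_{4,3}$-pair is trivial, Hopf, or a $(2,4)$-torus link (so $\sum_{\Gamma_{4,3}} {\rm lk}(f(\lambda))^2 = h_2 + 4q$).

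Summing Theorem \ref{main1} over the seven $K_6$-subgraphs of $K_7$, and using that each 6-cycle and each $\Gamma_{3,3}$-pair of $K_7$ belongs to exactly one such subgraph (both use six of the seven vertices) whereas each 5-cycle belongs to two (but contributes zero rectilinearly), I obtain the global identity $h = 2 N_6 + 7 \ge 7$. Plugging this and $N_5 = 0$ into Theorem \ref{main2} yields
\[
7 N_7 \;=\; 6 N_6 + 2(h_2 + 4q) - 21 \;=\; 3h + 2h_2 + 8q - 42.
\]
Theorem \ref{CG2} already tells us $N_7 \equiv 1 \pmod{2}$, so the positivity $N_7 \ge 1$ reduces to the inequality $3h + 2h_2 + 8q \ge 49$; given $h \ge 7$, the remaining task is to produce the rectilinear lower bound $h_2 + 4q \ge 14$ (or, more sharply, $h_2 \ge 14$) on the $\Gamma_{4,3}$-linking contributions.

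This rectilinear lower bound on $\sum_{\Gamma_{4,3}} {\rm lk}^2$ is the main obstacle: it does not follow formally from Proposition \ref{stick} and must be extracted by exploiting the straight-line geometry of $K_7$. A natural route is to identify, for each of the seven $K_6$-subgraphs (equivalently, for each vertex $v$ deleted from $K_7$), enough $\Gamma_{4,3}$-pairs containing $v$ whose linking number is forced to be nonzero, either by a Conway--Gordon-type identity on an intermediate subgraph or by direct combinatorial-geometric analysis of the Hopf and torus configurations produced by straight-line edges. Granting this bound, $N_7 \ge 1$ follows immediately from the displayed identity, and combined with Theorem \ref{CG2} proves the first assertion.

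For the ``if and only if'' part, equality $N_7 = 1$ together with the sharp bounds $h \ge 7$ and $h_2 + 4q \ge 14$ forces both $h = 7$ (equivalently $N_6 = 0$) and $h_2 + 4q = 14$; the rectilinear quantization ${\rm lk}^2 \in \{0,1,4\}$ combined with the residue condition $3h + 2h_2 + 8q \equiv 0 \pmod 7$ then isolates $q = 0$ and $h_2 = 14$. Hence $f(K_7)$ contains exactly $h + h_2 = 7 + 14 = 21$ Hopf links and no other non-trivial 2-component link. Conversely, the hypothesis ``exactly twenty-one Hopf links and no other non-trivial 2-component link'' gives $q = 0$ and $h + h_2 = 21$; together with $h = 2N_6 + 7$ and the lower bound $h_2 + 4q \ge 14$, this forces $h = 7$, $h_2 = 14$, $N_6 = 0$, and plugging back into the displayed identity recovers $N_7 = 1$.
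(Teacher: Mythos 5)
Your algebraic skeleton is essentially the paper's: with $N_5=0$ and $N_6\ge 0$ from Proposition \ref{stick}, summing Theorem \ref{main1} over the seven $K_6$-subgraphs (this is exactly Lemma \ref{main3lemma}, giving $h=2N_6+7\ge 7$) and feeding the result into Theorem \ref{main2} (equivalently, using formula (\ref{cor2_2})) reduces everything to a lower bound on $\sum_{\lambda\in\Gamma_{4,3}(K_7)}{\rm lk}(f(\lambda))^2$. But that bound is precisely where you stop: you write ``granting this bound'' and only sketch ``a natural route.'' This is the genuine gap. In the paper the missing input is Lemma \ref{K7link} (Fleming--Mellor): for \emph{every} spatial embedding of $K_7$ there are at least seven pairs in $\Gamma_{3,3}$ and fourteen pairs in $\Gamma_{4,3}$ with \emph{odd} linking number. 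Note also that your framing is off: this is a purely topological statement about all embeddings of $K_7$, not something ``extracted by exploiting the straight-line geometry,'' so trying to prove it rectilinearly is the wrong direction; without it (or an equivalent), the positivity of $N_7$ is not established.

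Moreover, even granting the weak bound $h_2+4q\ge 14$, your ``only if'' argument fails. From $N_7=1$ you correctly force $3h+2h_2+8q=49$, hence $h=7$ and $h_2+4q=14$; but the ``residue condition mod $7$'' is vacuous (49 is automatically divisible by 7) and the quantization ${\rm lk}^2\in\{0,1,4\}$ does not exclude, say, $(h_2,q)=(10,1)$ or $(6,2)$, which satisfy everything you have proved. What kills the $(2,4)$-torus links is the \emph{odd-linking-number} form of the bound: since a $(2,4)$-torus link has linking number $\pm 2$, Lemma \ref{K7link} gives $h_2\ge 14$ (at least fourteen Hopf links among $\Gamma_{4,3}$-pairs), and this is what the paper uses to conclude $q=0$, $h_2=14$, $h=7$. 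So both halves of your proposal hinge on the Fleming--Mellor counting lemma, in its odd-linking form, and that ingredient is neither proved nor correctly located in your outline; the rest of your computation (including the ``if'' direction, which works once $h_2\ge 14$ with $q=0$ is available) is sound and matches the paper's.
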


By Theorem \ref{main4}, for any rectilinear spatial embedding $f$ of $K_{7}$, there exists a $7$-cycle $\gamma$ of $K_{7}$ such that $a_{2}(f(\gamma))>0$. Since $s(f(\gamma))\le 7$, by Proposition \ref{stick} (1), $f(\gamma)$ must be a trefoil knot because $a_{2}({\rm trefoil\ knot})=1$ and $a_{2}({\rm figure\ eight\ knot})=-1$. Namely Theorem \ref{Alfon} is obtained from Theorem \ref{main4} as a corollary. We give a proof of Theorem \ref{main4} in section $4$. Note that our proof is topological whereas Ram{\'\i}rez Alfons{\'\i}n's proof is extremely combinatorial and computational. 

On the other hand, as a result for rectilinear spatial embeddings of $K_{6}$ corresponding to Theorem \ref{CG1}, the following is known. 

\begin{Theorem}\label{HJ} {\rm (Huh-Jeon \cite{HJ07})} 
For any rectilinear spatial embedding $f$ of $K_{6}$, $f(K_{6})$ contains at most one trefoil knot and at most three Hopf links. In particular, 

\noindent
{\rm (1)} $f(K_{6})$ does not contain a trefoil knot if and only if $f(K_{6})$ contains exactly one Hopf link. 

\noindent
{\rm (2)} $f(K_{6})$ contains a trefoil knot if and only if $f(K_{6})$ contains exactly three Hopf links. 
\end{Theorem}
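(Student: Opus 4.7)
The plan is to combine Theorem \ref{main1} with the stick-number dichotomy from Proposition \ref{stick}. Fix a rectilinear embedding $f$ of $K_{6}$. For every $\gamma\in\Gamma_{k}(K_{6})$ one has $s(f(\gamma))\le k\le 6$, so Proposition \ref{stick}(1) forces every 5-cycle of $K_{6}$ to map to a trivial knot, and every 6-cycle to map either to a trivial knot or to a trefoil knot. Likewise Proposition \ref{stick}(2) forces every $\lambda\in\Gamma_{3,3}(K_{6})$ to map either to a trivial link or to a Hopf link. Writing $T$ for the number of trefoil 6-cycles and $H$ for the number of Hopf-linked pairs in $\Gamma_{3,3}(K_{6})$, and using $a_{2}({\rm trefoil})=1$, $a_{2}({\rm unknot})=0$, ${\rm lk}({\rm Hopf})^{2}=1$ and ${\rm lk}({\rm trivial})=0$, Theorem \ref{main1} collapses to the integral identity
\[
2T=H-1.
\]

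This identity immediately yields both equivalences in the theorem: $T=0$ if and only if $H=1$, and $T=1$ if and only if $H=3$. Hence Theorem \ref{HJ} reduces to excluding $(T,H)\in\{(2,5),(3,7),(4,9)\}$, equivalently to the single bound $H\le 3$ (or $T\le 1$).

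This last step is the main obstacle, and the only place where the rectilinear hypothesis is genuinely used beyond the stick-number reductions above. I would prove it combinatorially: two straight triangles on disjoint vertex triples in ${\mathbb R}^{3}$ are linked if and only if one triangle pierces the other transversely in exactly one point, which is equivalent to a prescribed sign pattern of the $3\times 3$ orientation determinants formed from the six vertices. Case-analysing the combinatorial type of the six-point configuration (for instance, on the face lattice of the convex hull of the six vertices, distinguishing the cases where all six points are extremal, where exactly five are extremal, and where four or fewer are) and in each case directly enumerating the interlocking triangle pairs, one verifies that the count never exceeds three. Combined with $2T=H-1$, this forces $(T,H)\in\{(0,1),(1,3)\}$, which proves both (1) and (2).
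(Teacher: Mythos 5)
Your reduction is exactly the paper's own first step: the stick-number bounds force every $5$-cycle to an unknot, every $6$-cycle to an unknot or trefoil, and every element of $\Gamma_{3,3}(K_{6})$ to a trivial link or Hopf link, so Theorem \ref{main1} collapses to $2T=H-1$; since $\sharp\Gamma_{3,3}(K_{6})=10$ and $H$ is odd, everything reduces to excluding $(T,H)=(2,5),(3,7),(4,9)$. Up to there the argument is correct and identical to the paper.

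The exclusion, however, is precisely where your proposal stops being a proof. Saying that one can case-analyse the face lattice of the convex hull of the six points and ``verify that the count never exceeds three'' is an unexecuted plan, not an argument: the bound $H\le 3$ is the substantial content of the theorem, and that enumeration over combinatorial types of six-point configurations is essentially the original purely combinatorial proof of Huh--Jeon \cite{HJ07} which this paper is explicitly written to replace. (Your piercing/determinant criterion for when two straight triangles are linked is correct, but it only sets up such an enumeration; it yields no bound by itself.) The paper closes this gap topologically: if $f(\lambda_{i})$ and $f(\lambda_{j})$ are both Hopf links, the subgraph $\Lambda_{ij}=\lambda_{i}\cup\lambda_{j}$ contracts along two edges to $D_{4}$, and Proposition \ref{homo2} (2) gives $\left|\alpha(f|_{\Lambda_{ij}})\right|=\left|{\rm lk}(f(\lambda_{i})){\rm lk}(f(\lambda_{j}))\right|=1$; since the only non-trivial knots available are trefoils with $a_{2}=1$ and $\omega=1$ on the relevant $6$-cycles (all shorter cycles being unknotted by the stick bound), $f(\Lambda_{ij})$ must contain exactly one trefoil $6$-cycle. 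Counting over the $\binom{H}{2}$ pairs of Hopf-linked $\lambda$'s, and using that each $6$-cycle of $K_{6}$ lies in exactly three of the $\Lambda_{ij}$'s, forces at least $\lceil\tbinom{H}{2}/3\rceil$ distinct trefoils, contradicting $T=(H-1)/2$ for $H=5,7,9$. To complete your proof you would need either to carry out your configuration case analysis in full (in effect re-proving \cite{HJ07}) or to supply an argument of this latter kind for the bound $H\le 3$.
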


Although Huh and Jeon do not use a computer in \cite{HJ07}, their proof is in the same spirit as Ram{\'\i}rez Alfons{\'\i}n's in that it is also purely combinatorial. As an application of Theorem \ref{main1}, we give an alternative topological proof of Theorem \ref{HJ} in section 4.

\section{Spatial graph-homology invariants} 

In this section, we introduce some homological invariants of spatial graphs which are needed later. Let $G$ be $K_{5}$ or $K_{3,3}$, where $K_{3,3}$ denotes the {\it complete bipartite graph} on $3+3$ vertices, namely the graph consisting of $6$ vertices $1,2,\ldots,6$ a pair of whose vertices $i$ and $j$ are connected by exactly one edge $\overline{ij}$ if $i+j$ is odd. We give an orientation to each edge of $G$ as illustrated in Fig. \ref{K5K33label}. For an unordered pair of disjoint edges $(x,y)$ of $K_{5}$, we define the sign $\varepsilon(x,y)$ by $\varepsilon(e_{i},e_{j})=1$, $\varepsilon(d_{k},d_{l})=-1$ and $\varepsilon(e_{i},d_{k})=-1$. For an unordered pair of disjoint edges $(x,y)$ of $K_{3,3}$, we also define the sign $\varepsilon(x,y)$ by $\varepsilon(c_{i},c_{j})=1$, $\varepsilon(b_{k},b_{l})=1$ and 
$\varepsilon(c_{i},b_{k})=1$ if $c_{i}$ and $b_{k}$ are parallel in Fig. \ref{K5K33label} and $-1$ if $c_{i}$ and $b_{k}$ are anti-parallel in Fig. \ref{K5K33label}. 
For a spatial embedding $f$ of $G$, we fix a regular diagram of $f$ and denote the sum of the signs of all crossing points between $f(x)$ and $f(y)$ by $l(f(x),f(y))$, where $(x,y)$ is an unordered pair of disjoint edges of $G$. Then, an integer ${\mathcal L}(f)$ defined by 
\begin{eqnarray*}
{\mathcal L}(f)=\sum_{(x,y)}\varepsilon(x,y)l(f(x),f(y)), 
\end{eqnarray*}
where the sum is taken over all unordered pairs of disjoint edges of $G$, is called the {\it Simon invariant} of $f$.

\begin{figure}[htbp]
      \begin{center}
\scalebox{0.425}{\includegraphics*{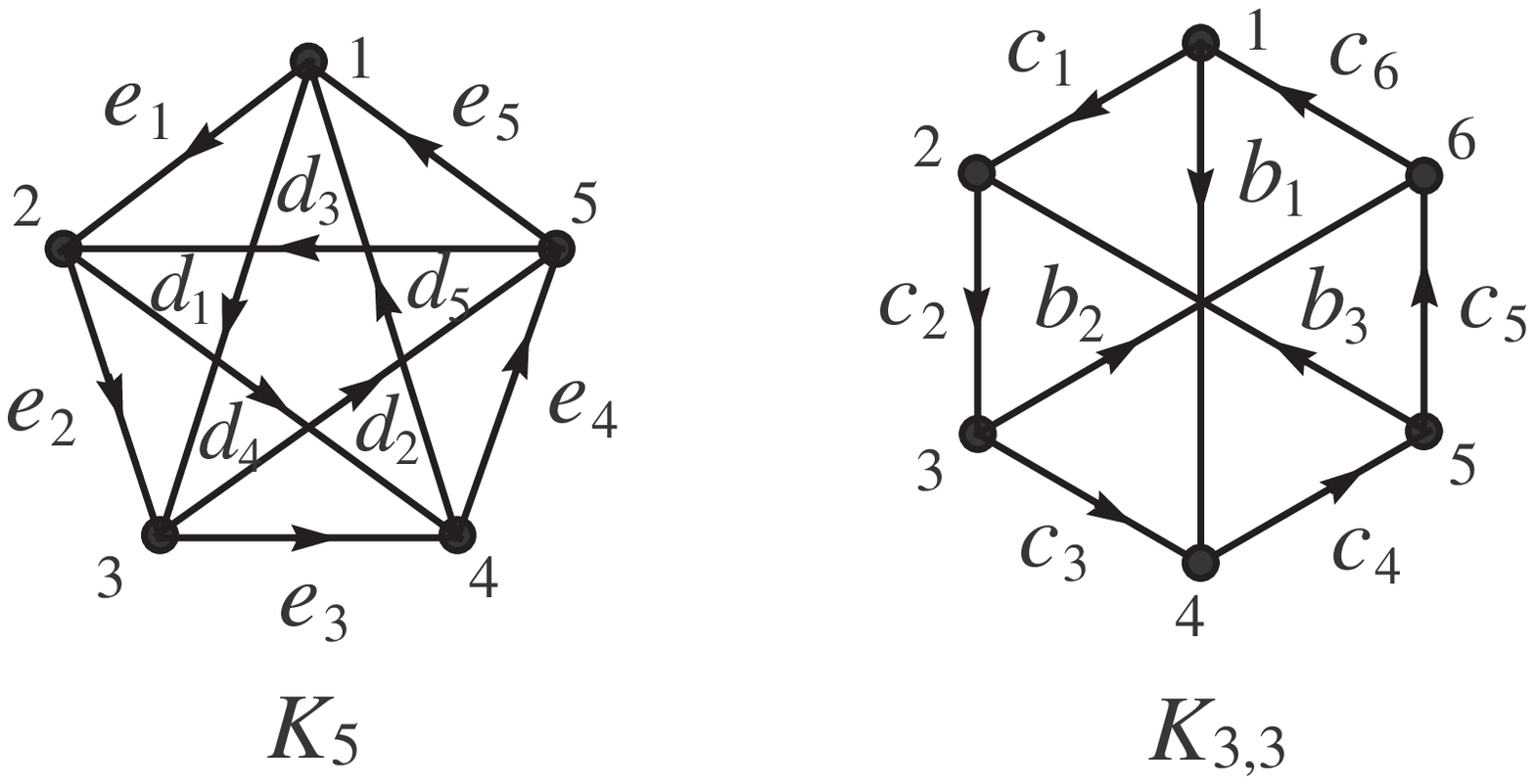}}
      \end{center}
   \caption{}
  \label{K5K33label}
\end{figure} 

It is known that ${\mathcal L}(f)$ is an odd integer valued ambient isotopy invariant of $f$ \cite{taniyama94b}. Moreover, the following is known. 

\begin{Proposition}\label{homo1} {\rm (\cite{taniyama94b})} 
Let $f$ be a spatial embedding of $K_{5}$ or $K_{3,3}$. Then ${\mathcal L}(f)$ is a spatial graph-homology invariant of $f$. 
\end{Proposition}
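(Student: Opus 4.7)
The plan is to invoke Taniyama's characterization of spatial graph-homology in terms of elementary local moves and then verify invariance of ${\mathcal L}(f)$ under each move. Since ${\mathcal L}(f)$ is already known to be an ambient isotopy invariant, what remains is to check invariance under the local moves that generate spatial graph-homology modulo ambient isotopy. Recall that spatial graph-homology is coarser than vertex-homotopy: its generating moves include (a) self-crossing changes on a single edge, (b) crossing changes between two edges sharing a common vertex, and (c) certain ``coherent'' simultaneous crossing changes on disjoint pairs of edges that preserve the linking number of every $2$-component sublink of $G$ --- the last being the moves which distinguish homology from the finer vertex-homotopy relation.

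Moves of types (a) and (b) manifestly leave ${\mathcal L}(f)$ unchanged: the sum defining ${\mathcal L}(f)$ ranges over pairs of \emph{disjoint} edges, so self-crossings of a single edge do not contribute, and a crossing change between two adjacent edges $x \neq y$ changes $l(f(x), f(y))$ at a pair $(x,y)$ that is not in the index set of the sum. For the coherent moves of type (c), each move simultaneously flips signs at crossings among disjoint edge pairs $(x_i, y_i)$, constrained so that $\mathrm{lk}(f(\lambda))$ is preserved for every $\lambda \in \Gamma_{k,l}(G)$; the induced change in ${\mathcal L}(f)$ is the signed combination $\sum_i \varepsilon(x_i, y_i)\, \Delta l(f(x_i), f(y_i))$, which I would show is forced to vanish by the sign pattern $\varepsilon$ in Fig.~\ref{K5K33label}. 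Concretely, one identifies $(\varepsilon(x,y))_{(x,y)}$, up to an overall sign, with a generator of the relevant $H_{2}$ of the symmetric deleted product of $G$, on which the linking-preservation constraints cut out precisely this kernel; for $G = K_{5}$, where no disjoint cycle pair exists, the type-(c) moves are vacuously constrained and simplify further.

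The main obstacle will be the last verification in the previous paragraph: extracting Taniyama's list of coherent moves and sign-tracking through each one for $K_{5}$ and $K_{3,3}$. A conceptually cleaner alternative I would keep in mind is to identify ${\mathcal L}(f)$ up to a factor of two with the Wu invariant of $f$, namely the pullback by $f \times f$ of a generator of $H^{2}((\mathbb{R}^{3} \times \mathbb{R}^{3} \setminus \Delta)/\mathbb{Z}_{2}) \cong \mathbb{Z}$ evaluated on the fundamental class of $(G \times G \setminus \Delta)/\mathbb{Z}_{2}$. In this formulation the invariance is automatic, since any $2$-complex bordism between $f(G)$ and $g(G)$ in $\mathbb{R}^{3} \times I$ induces a homotopy of the maps on the deleted products, and the Wu invariant depends only on the homotopy class; establishing the identification itself is nontrivial algebraic topology on the symmetric deleted products of $K_{5}$ and $K_{3,3}$, but it would bypass the case-by-case move analysis entirely.
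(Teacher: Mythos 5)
You are attempting a result the paper does not prove at all: Proposition \ref{homo1} is quoted from \cite{taniyama94b}, where it is part of Taniyama's classification of spatial embeddings up to homology by the Wu invariant. Measured against that, your primary plan has a genuine gap. Spatial graph-homology is defined in \cite{taniyama94b} by an embedded $2$-complex in ${\mathbb R}^{3}\times[0,1]$ (the product $G\times[0,1]$ with handles attached to the interiors of the bands $e\times[0,1]$), not by a list of local moves, and the generating set you posit is not a theorem you can invoke. Worse, your own reading of move (c) refutes the plan: for $K_{5}$ there are no disjoint pairs of cycles, so a ``vacuously constrained'' type-(c) move would include a single crossing change between two disjoint edges $x,y$, which changes ${\mathcal L}(f)$ by $\pm 2\varepsilon(x,y)\neq 0$ --- the change is not ``forced to vanish by the sign pattern.'' If homology really were generated by (a), (b) and such unconstrained (c)-moves, then all embeddings of $K_{5}$ would be homologous and the proposition itself would be false; in fact Taniyama's theorem gives infinitely many homology classes of embeddings of $K_{5}$, distinguished exactly by ${\mathcal L}$. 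So plan A cannot be repaired by sign-tracking; its premise is wrong.

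Your fallback is the right route and is essentially what \cite{taniyama94b} does, but as stated it is not yet a proof. The Wu invariant is not obtained by evaluating a pullback on a fundamental class: $(G\times G\setminus\Delta)/{\mathbb Z}_{2}$ is a $2$-complex with no fundamental class, and $({\mathbb R}^{3}\times{\mathbb R}^{3}\setminus\Delta)/{\mathbb Z}_{2}$ is homotopy equivalent to ${\mathbb R}{\rm P}^{2}$, whose $H^{2}$ with untwisted integer coefficients is ${\mathbb Z}/2{\mathbb Z}$, not ${\mathbb Z}$; one must work with the skew-symmetric (twisted) second cohomology of the deleted product, where the pullback class itself is the invariant, and then identify ${\mathcal L}(f)$ with that class under an isomorphism of the relevant group with ${\mathbb Z}$ for $K_{5}$ and $K_{3,3}$. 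More importantly, the sentence ``any $2$-complex bordism induces a homotopy of the maps on the deleted products'' is precisely the nontrivial point, not a formality: the cobounding complex is not a product (handles are attached to the bands), so its deleted product does not obviously contain $\widetilde{G}\times[0,1]$, and one needs Taniyama's actual argument that the restrictions to the two ends pull the class back equally. Filling in those two steps --- homology invariance of the Wu class and its identification with the Simon invariant --- is exactly the content of the cited proof, and your sketch currently assumes both.
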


Here, a {\it spatial graph-homology} is an equivalence relation on spatial graphs introduced in \cite{taniyama94b} as a generalization of a {\it link-homology} on oriented links. We refer the reader to \cite{taniyama94b} for the precise definition of a spatial graph-homology. 

On the other hand, let $G$ be $K_{5}$, $K_{3,3}$ or $D_{4}$, where $D_{4}$ is the graph as illustrated in Fig. \ref{D4}. Let $\omega:\Gamma(G)\to {\mathbb Z}$ be a map defined by  
\begin{eqnarray*}
\omega(\gamma)=\left\{
       \begin{array}{@{\,}ll}
       1 & \mbox{(if $\gamma$ is a $5$-cycle)}\\
       -1 & \mbox{(if $\gamma$ is a $4$-cycle)}\\
       0 & \mbox{(if $\gamma$ is a $3$-cycle)}
       \end{array}
     \right.
\end{eqnarray*}
if $G=K_{5}$, 
\begin{eqnarray*}
\omega(\gamma)=\left\{
       \begin{array}{@{\,}ll}
       1 & \mbox{(if $\gamma$ is a $6$-cycle)}\\
       -1 & \mbox{(if $\gamma$ is a $4$-cycle)}\\
       \end{array}
     \right.
\end{eqnarray*}
if $G=K_{3,3}$ and 
\begin{eqnarray*}
\omega(\gamma)=\left\{
       \begin{array}{@{\,}ll}
       1 & \mbox{(if $\gamma$ is a $4$-cycle $e_{i}\cup e_{j}\cup e_{k}\cup e_{l}$ with $i+j+k+l\equiv 0\pmod{2}$)}\\
       -1 & \mbox{(if $\gamma$ is a $4$-cycle $e_{i}\cup e_{j}\cup e_{k}\cup e_{l}$ with $i+j+k+l\equiv 1\pmod{2}$)}\\
       0 & \mbox{(if $\gamma$ is a $2$-cycle)}
       \end{array}
     \right.
\end{eqnarray*}
if $G=D_{4}$. For a spatial embedding $f$ of $G$, an integer $\alpha_{\omega}(f)$ defined by 
\begin{eqnarray*}
\alpha_{\omega}(f)=\sum_{\gamma\in \Gamma(G)}\omega(\gamma)a_{2}(f(\gamma)) 
\end{eqnarray*}
is called the {\it $\alpha$-invariant} of $f$ \cite{taniyama94a}. Then the following holds. 

\begin{Proposition}\label{homo2} 
{\rm (1) (Motohashi-Taniyama \cite{MT97})} Let $f$ be a spatial embedding of $K_{5}$ or $K_{3,3}$. Then it follows that
\begin{eqnarray*}
\alpha_{\omega}(f)=\frac{{{\mathcal L}(f)}^{2}-1}{8}. 
\end{eqnarray*}

\noindent
{\rm (2) (Taniyama-Yasuhara \cite{TY01})} Let $f$ be a spatial embedding of $D_{4}$. We denote the pair of disjoint two $2$-cycles $e_{1}\cup e_{2}$ and $e_{5}\cup e_{6}$ $($resp. $e_{3}\cup e_{4}$ and $e_{7}\cup e_{8}$$)$ of $D_{4}$ by $\lambda$ $($resp. $\lambda'$$)$ Then it follows that 
\begin{eqnarray*}
\left|\alpha_{\omega}(f)\right|=\left|{\rm lk}(f(\lambda)){\rm lk}(f(\lambda'))\right|. 
\end{eqnarray*}
\end{Proposition}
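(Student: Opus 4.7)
The plan is to treat both parts in parallel via a three-step framework: show $\alpha_\omega$ is a spatial graph-homology invariant on $G$, invoke a classification of spatial graph-homology classes of $G$, and verify the identity on a family of representatives exhausting every class.

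First, I would check that $\alpha_\omega$ is invariant under self crossing changes on a single edge, which generate spatial graph-homology. If $f_+$ and $f_-$ differ by such a crossing change on an edge $e$, then $f_\pm(\gamma)$ coincide for cycles $\gamma \not\ni e$, while for $\gamma \ni e$ the Conway skein relation gives $a_2(f_+(\gamma)) - a_2(f_-(\gamma)) = {\rm lk}(L_\gamma)$, where $L_\gamma$ is the two-component link obtained by smoothing the crossing, which splits $\gamma$ into a pair of cycles $\gamma',\gamma''$ of $G$ meeting only at the two endpoints of $e$. Reorganizing the sum $\sum_\gamma \omega(\gamma)\,{\rm lk}(L_\gamma)$ by the resulting pair $(\gamma',\gamma'')$, the weights $\omega$ are tailored so that the contributions cancel in each group; this is the role of the specific sign pattern in the definition of $\omega$ for $K_5$, $K_{3,3}$, and $D_4$.

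Next, for part (1) I would apply Taniyama's classification in \cite{taniyama94b}: two spatial embeddings of $K_5$ (resp.\ $K_{3,3}$) are graph-homologous if and only if they share the same Simon invariant, and $\mathcal{L}$ attains every odd integer on such embeddings. Starting from a trivial-like embedding with $\mathcal{L}=\pm 1$ and $\alpha_\omega = 0$, one produces a family $f_m$ with $\mathcal{L}(f_m) = 2m+1$ by iteratively inserting a Hopf-link clasp on a single edge (which shifts $\mathcal{L}$ by $2$, and whose effect on each affected cycle can be tracked via the skein relation); an inductive computation then verifies $\alpha_\omega(f_m) = ({\mathcal L}(f_m)^{2} - 1)/8$. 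For part (2), Taniyama-Yasuhara \cite{TY01} show that the graph-homology class of an embedding of $D_4$ is determined by the ordered pair $({\rm lk}(f(\lambda)),{\rm lk}(f(\lambda')))$, which can be realized independently; on a standard representative with linking numbers $p$ and $q$ a direct skein computation gives $\alpha_\omega = \pm pq$.

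The main obstacle will be the first step: one must carefully enumerate the cycles passing through each edge $e$ and verify the cancellation $\sum_{\gamma\ni e}\omega(\gamma)\,{\rm lk}(L_\gamma) = 0$ for every choice of the smoothing data $(\gamma',\gamma'')$. The combinatorics is light for $D_4$ but becomes delicate for $K_5$ and $K_{3,3}$, where one must track all of the $3$-, $4$-, and $5$-cycles (resp.\ $4$- and $6$-cycles) through a fixed edge; the sign convention in the definition of $\omega$ (positive on long cycles, negative on short ones) is exactly what is needed for the ${\rm lk}(L_\gamma)$ terms to collapse in pairs, and pinning down this cancellation is the essential point of the argument.
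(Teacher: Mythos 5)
The paper itself gives no proof of Proposition \ref{homo2}: both parts are quoted results, (1) from Motohashi--Taniyama \cite{MT97} and (2) from Taniyama--Yasuhara \cite{TY01}, so your attempt can only be assessed on its own terms. Your three-step plan (invariance, classification, verification on representatives) is indeed the standard shape of such arguments, but as written it has a genuine gap at the junction of Steps 1 and 2. Self crossing changes on a single edge do not generate spatial graph-homology; they generate edge-homotopy, which is in general strictly finer (homotopy implies homology, but homologous embeddings need not be related by self crossing changes --- compare the Borromean rings, which are link-homologous but not link-homotopic to a trivial link). So Step 1 can at best deliver homotopy invariance of $\alpha_{\omega}$, while Step 2 invokes the homology classification of embeddings of $K_{5}$ and $K_{3,3}$ by the Simon invariant \cite{taniyama94b}; these two statements do not connect. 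To close the argument you must either (a) invoke the classification of these embeddings up to edge-homotopy by the Simon invariant (a nontrivial theorem of essentially the same depth as what you are proving), or (b) prove invariance of $\alpha_{\omega}$ under moves that actually generate homology (delta-type moves/band surgeries, in the spirit of \cite{MT97}), or (c) avoid classification by showing that $\alpha_{\omega}(f)-({\mathcal L}(f)^{2}-1)/8$ is unchanged under an arbitrary crossing change between any two edges and checking one standard embedding; none of these is carried out. The same structural issue recurs in your treatment of $D_{4}$ in part (2).

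The invariance computation itself is also misdescribed. For a self crossing change on an edge $e$, smoothing the crossing does not split a cycle $\gamma\ni e$ into two cycles of $G$: one component is a proper subloop of $f(e)$ and the other is the complementary loop, neither of which is the image of a cycle of $G$, so your bookkeeping indexed by pairs $(\gamma',\gamma'')$ is not available. Moreover the cancellation is not a consequence of the unsigned sign pattern of $\omega$ alone: expressing the change of $\alpha_{\omega}$ as a weighted sum of linking numbers of the small subloop with the edges of the cycles through $e$, the coefficient attached to an edge $e'$ of $K_{5}$ disjoint from $e$ is (number of $5$-cycles containing $e,e'$) minus (number of $4$-cycles containing $e,e'$) $=4-2\neq 0$ if orientations are ignored; the sum vanishes only after keeping track of the direction in which each cycle traverses $e'$, which your sketch never does. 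So the ``essential point'' you identify would fail as set up; what is needed is the signed, orientation-sensitive balance condition, which is precisely the content of Taniyama's homotopy-invariance theorem for $\alpha_{\omega}$ in \cite{taniyama94a} and could simply be cited instead of reproved.
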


\begin{figure}[htbp]
      \begin{center}
\scalebox{0.425}{\includegraphics*{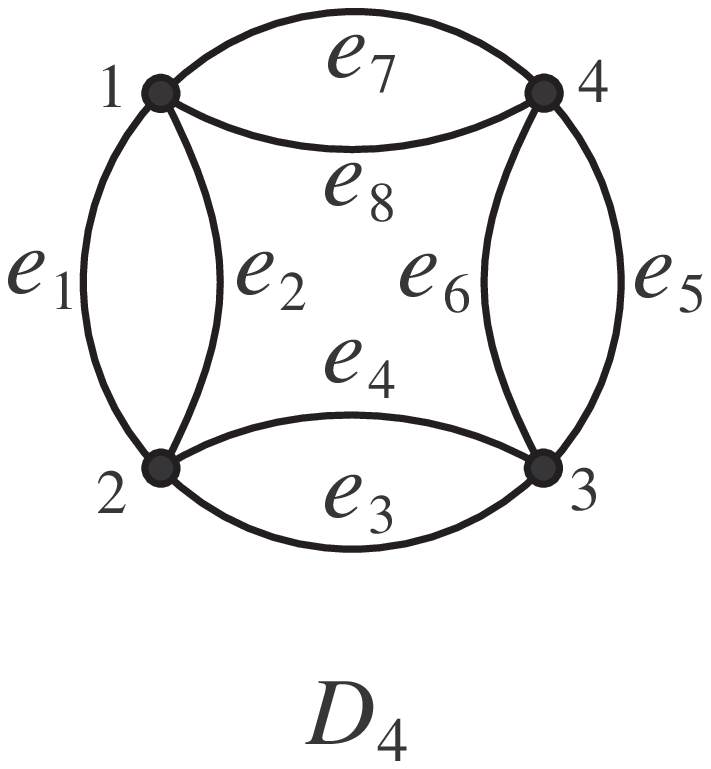}}
      \end{center}
   \caption{}
  \label{D4}
\end{figure} 
\section{Proofs of Theorems \ref{main1} and \ref{main2}} 

First we give a proof of Theorem \ref{main1}. We can see that there exist exactly ten subgraphs $H_{1},H_{2},\ldots,H_{10}$ of $K_{6}$ each of which is isomorphic to $K_{3,3}$, exactly six subgraphs $G_{1},G_{2},\ldots,G_{6}$ of $K_{6}$ each of which is isomorphic to $K_{5}$ and exactly ten pairs of disjoint $3$-cycles $\lambda_{1},\lambda_{2},\ldots,\lambda_{10}$ of $K_{6}$. Then we have the following. 

\begin{Lemma}\label{Simon_lemma} 
Let $f$ be a spatial embedding of $K_{6}$. Then we have that
\begin{eqnarray*}
\sum_{i=1}^{10}{\mathcal L}(f|_{H_{i}})^{2}
-\sum_{i=1}^{6}{\mathcal L}(f|_{G_{i}})^{2}
=4\sum_{i=1}^{10}{\rm lk}(f(\lambda_{i}))^{2}. 
\end{eqnarray*}
\end{Lemma}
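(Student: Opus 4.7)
The plan is to prove the identity as an equality of quadratic forms in the signed crossing numbers $x_p:=l(f(e),f(e'))$ associated to the $45$ unordered pairs $p=(e,e')$ of disjoint edges of $K_6$. Fixing a regular diagram of $f(K_6)$, the definition of the Simon invariant presents each of $\mathcal{L}(f|_{H_i})$ and $\mathcal{L}(f|_{G_i})$ as a $\pm 1$ linear combination of those $x_p$ with $p$ inside $H_i$ or $G_i$, and one has $2\,{\rm lk}(f(\lambda_i))=\sum_{p\in\lambda_i}x_p$, the sum running over the nine pairs $p$ having one edge in each component of $\lambda_i$. Hence both sides of the claimed identity are quadratic forms in the $45$ variables $\{x_p\}$, and the lemma reduces to an identity of quadratic forms that can be verified coefficient by coefficient.

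To organise the comparison I would first record the following incidence counts, which follow from routine counting on the vertex set of $K_6$. A pair $p=(\{a,b\},\{c,d\})$ with complementary vertices $\{u,v\}$ lies in exactly the two $K_5$-subgraphs $G_u$ and $G_v$; in exactly four of the ten $K_{3,3}$-subgraphs, namely those whose bipartition of $\{1,\dots,6\}$ makes both edges $\{a,b\}$ and $\{c,d\}$ bipartite; and in exactly the two disjoint $3$-cycle pairs $(\{a,b,u\},\{c,d,v\})$ and $(\{a,b,v\},\{c,d,u\})$. Since $\varepsilon(\cdot,\cdot)^2\equiv 1$, the coefficient of $x_p^{\,2}$ on the left-hand side is $4-2=2$, which matches the coefficient $2$ on the right-hand side, so the diagonal terms agree.

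For the off-diagonal coefficient of $x_p x_q$ with $p\neq q$, I would perform a case analysis on the overlap $|V(p)\cap V(q)|\in\{2,3,4\}$: the two pairs can use the same four vertices, share three vertices, or share only two (in which case they use all six vertices of $K_6$). In each case I would list all subgraphs $H_i$, $G_i$ and all disjoint cycle pairs $\lambda_i$ containing both $p$ and $q$, compute the sign products $\varepsilon_{H_i}(p)\varepsilon_{H_i}(q)$ and $\varepsilon_{G_i}(p)\varepsilon_{G_i}(q)$ from the definitions attached to the labelled graphs in Figure~2.1, and verify that the resulting quantity on the left equals the number of $\lambda_i$ containing both $p$ and $q$ on the right. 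The main technical obstacle is this sign bookkeeping: the incidence counts themselves are elementary, but the signs $\varepsilon$ for the $K_5$ and $K_{3,3}$ subgraphs are sensitive to the particular edge-labellings inherited from Figure~2.1 and must be tracked carefully in each configuration. Once all cases check out, the identity of quadratic forms, and hence the lemma, follows.
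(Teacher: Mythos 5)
Your overall strategy is genuinely different from the paper's: the paper never expands in crossing numbers, but instead uses the classification of spatial embeddings of $K_{6}$ up to spatial graph-homology to replace $f$ by a standard embedding $h$ depending on ten integers $m_{i},n_{i}$ (Fig.\ 3.1), notes that the Simon invariants and the linking numbers are spatial graph-homology invariants (Proposition 2.1), and then verifies the identity by one explicit polynomial computation in those ten parameters. Your reduction to an identity of quadratic forms in the $45$ variables $x_{p}$ is viable in principle, and your diagonal computation ($4-2=2$ on the left, $2$ on the right) is correct. But there is a genuine problem in your setup of the right-hand side: once the $x_{p}$ are defined with respect to a fixed orientation of the fifteen edges of $K_{6}$, one has $2\,{\rm lk}(f(\lambda_{i}))=\sum_{p}\epsilon_{\lambda_{i}}(p)\,x_{p}$, where the signs $\epsilon_{\lambda_{i}}(p)=\pm 1$ record whether the global edge orientations agree with chosen cyclic orientations of the two triangles, and these signs cannot all be made $+1$: a global orientation of the edges of $K_{6}$ is a tournament, and every tournament on more than three vertices contains a transitive triple, so not all twenty triangles can be coherently oriented. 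Hence the off-diagonal coefficient of $x_{p}x_{q}$ on the right is a \emph{signed} count, not ``the number of $\lambda_{i}$ containing both $p$ and $q$''. For instance, orienting every edge from its smaller to its larger endpoint, $p=(\overline{12},\overline{45})$ and $q=(\overline{12},\overline{46})$ lie in the single pair $\lambda$ whose components are the $3$-cycles on $\{1,2,3\}$ and $\{4,5,6\}$, yet the coefficient of $x_{p}x_{q}$ in $4\,{\rm lk}(f(\lambda))^{2}$ is $-2$, not $+2$; applying your stated criterion literally would make such cases appear to fail even though the lemma is true.

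Second, and more importantly, the off-diagonal verification --- which you yourself identify as the main technical obstacle, and which in this approach is the entire content of the lemma --- is only announced, never carried out. Executing it requires fixing identifications of the ten $H_{i}$ with the labelled $K_{3,3}$ and of the six $G_{i}$ with the labelled $K_{5}$ of Fig.\ 2.1, incorporating the orientation-mismatch signs into every linear form (including the linking-number forms, as above), and checking the coefficient of $x_{p}x_{q}$ for each of the $\binom{45}{2}$ unordered pairs; the $S_{6}$-symmetry does not act transparently on the sign data, so reducing to a few representative cases is itself a nontrivial step that would have to be justified. Until that bookkeeping is actually done, or replaced by a structural argument, what you have is a plan rather than a proof; the paper's homology reduction avoids all of this by turning the verification into a single identity among polynomials in ten integer variables.
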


\begin{proof}
For any spatial embedding $f$ of $K_{6}$, there exist ten integers $m_{i}\ (i=1,2,\ldots, 5)$ and $n_{i}\ (i=1,2,\ldots, 5)$ such that $f$ is spatial graph-homologous to the spatial embedding $h$ of $K_{6}$ as illustrated in Fig. \ref{K6homology} \cite{S96},\cite{N00}, where the rectangle represented by an integer $k$ stands for $|k|$ half twists as illustrated in Fig. \ref{twists}. In general, for two spatial embeddings $g_{1}$ and $g_{2}$ of a graph which are spatial graph-homologous, $g_{1}|_{F}$ and $g_{2}|_{F}$ are also spatial graph-homologous for any subgraph $F$ of the graph by the definition. We recall that the Simon invariant is a spatial graph-homology invariant (Proposition \ref{homo1}), and the linking number is also a typical spatial graph-homology invariant. Thus we have that 
\begin{eqnarray*}
{\mathcal L}(f|_{H_{i}})&=&{\mathcal L}(h|_{H_{i}})\ (i=1,2,\ldots,10),\\
{\mathcal L}(f|_{G_{i}})&=&{\mathcal L}(h|_{G_{i}})\ (i=1,2,\ldots,6),\\
{\rm lk}(f(\lambda_{i}))&=&{\rm lk}(h(\lambda_{i}))\ (i=1,2,\ldots,10). 
\end{eqnarray*}
Therefore it is sufficient to show that 
\begin{eqnarray}\label{f-h}
\sum_{i=1}^{10}{\mathcal L}(h|_{H_{i}})^{2}
-\sum_{i=1}^{6}{\mathcal L}(h|_{G_{i}})^{2}
=4\sum_{i=1}^{10}{\rm lk}(h(\lambda_{i}))^{2}. 
\end{eqnarray}

\begin{figure}[htbp]
      \begin{center}
\scalebox{0.475}{\includegraphics*{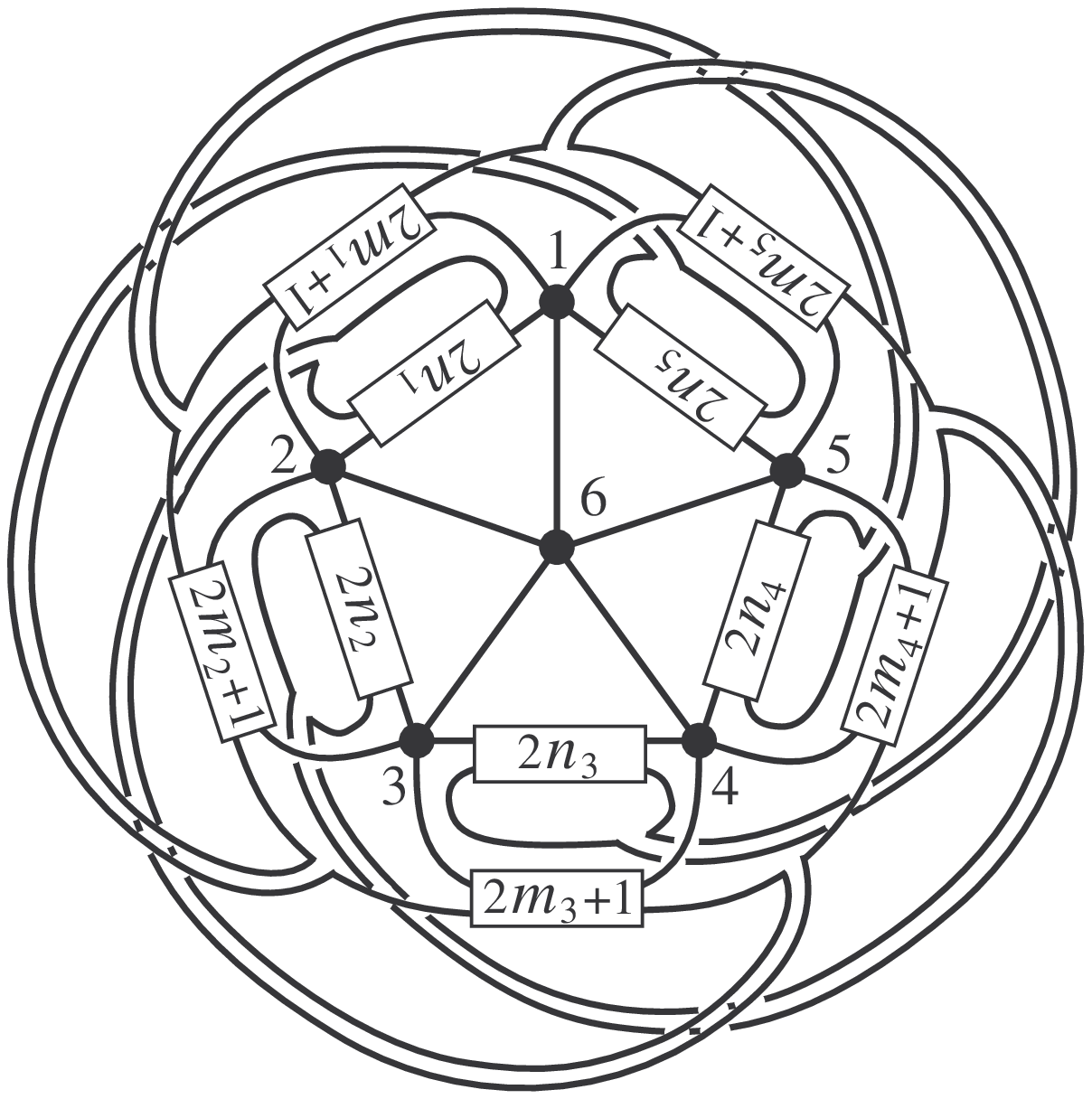}}
      \end{center}
   \caption{}
  \label{K6homology}
\end{figure} 
\begin{figure}[htbp]
      \begin{center}
\scalebox{0.4}{\includegraphics*{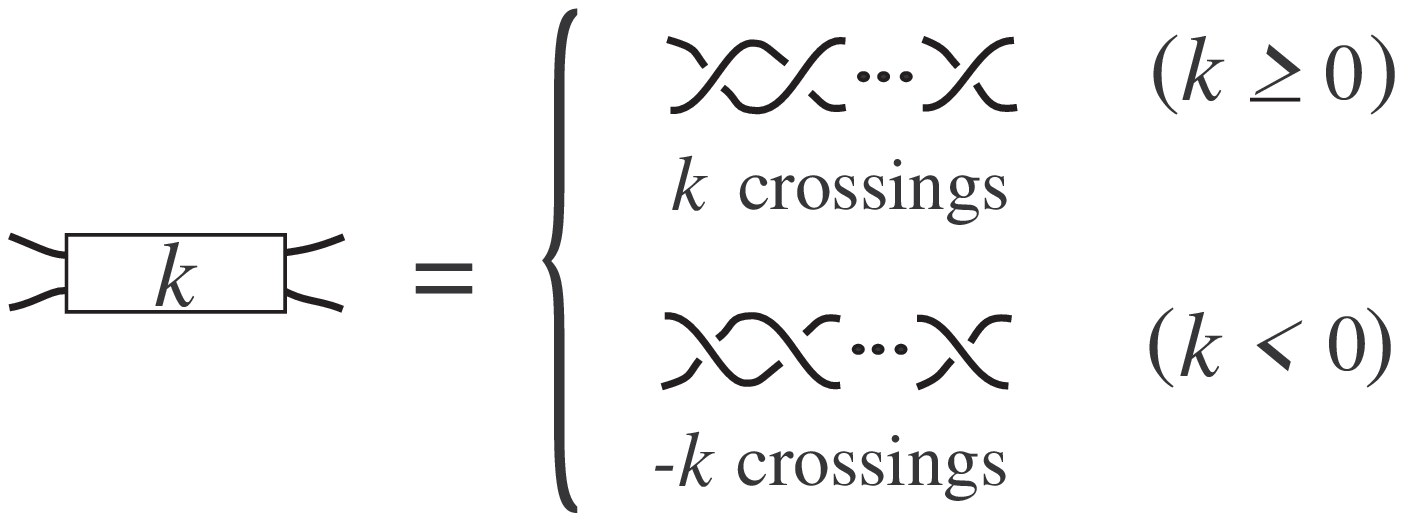}}
      \end{center}
   \caption{}
  \label{twists}
\end{figure} 

We may assume that $h(H_{i})\ (i=1,2,\ldots,5)$ and $h(H_{i+5})\ (i=1,2,\ldots,5)$ are spatial graphs as illustrated in Fig. \ref{K33Simon1} (1) and (2) respectively, $h(G_{i})\ (i=1,2,\ldots,5)$ and $h(G_{6})$ are spatial graphs as illustrated in Fig. \ref{K5Simon1} (1) and (2) respectively, and $h(\lambda_{i})\ (i=1,2,\ldots,5)$ and $h(\lambda_{i+5})\ (i=1,2,\ldots,5)$ are $2$-component links as illustrated in Fig. \ref{lk1} (1) and (2) respectively, where we regard $m_{i+5}=m_{i}$ and $n_{i+5}=n_{i}$ for $i=1,2,\ldots,5$. 

\begin{figure}[htbp]
      \begin{center}
\scalebox{0.4}{\includegraphics*{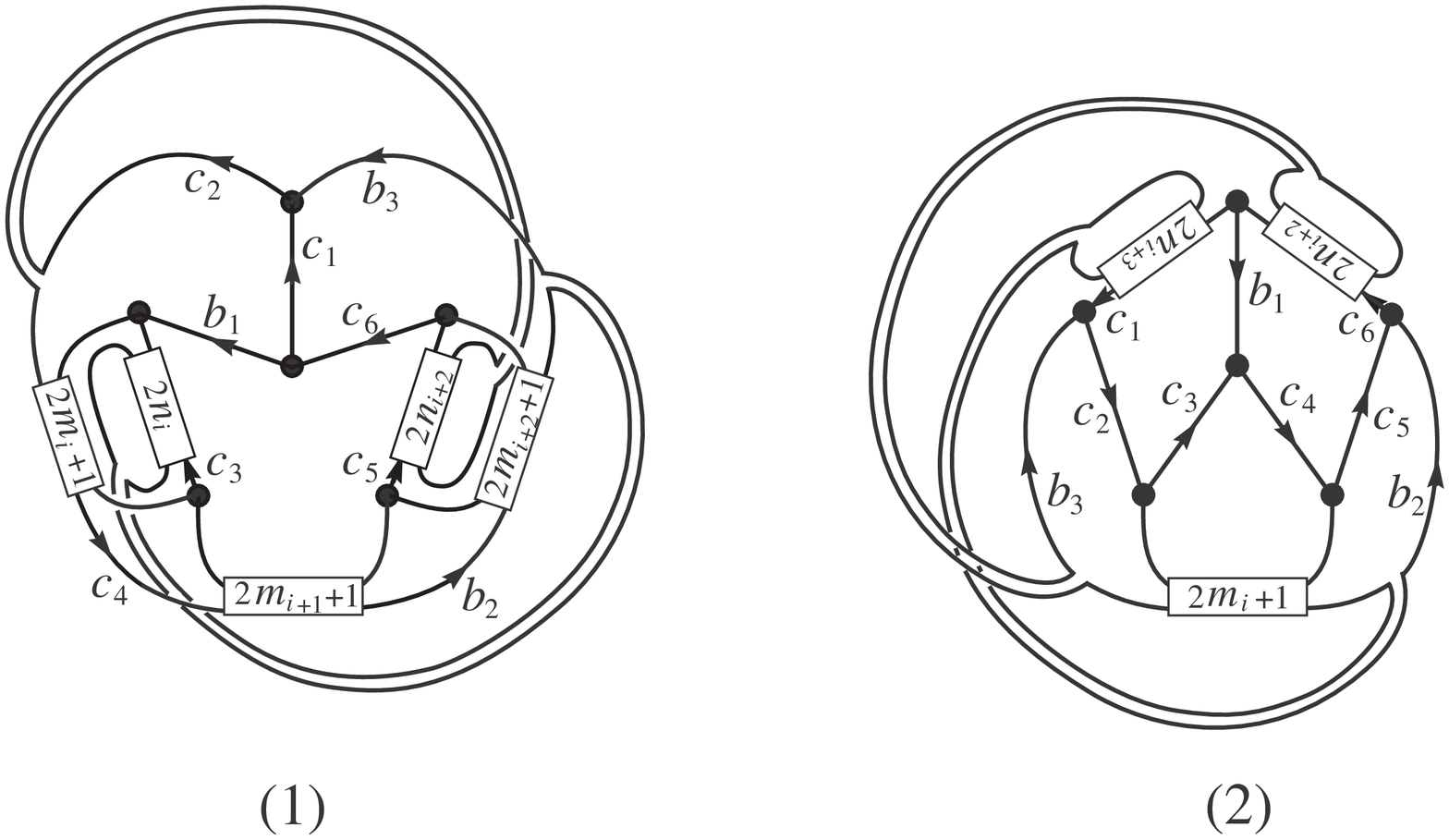}}
      \end{center}
   \caption{}
  \label{K33Simon1}
\end{figure} 
\begin{figure}[htbp]
      \begin{center}
\scalebox{0.4}{\includegraphics*{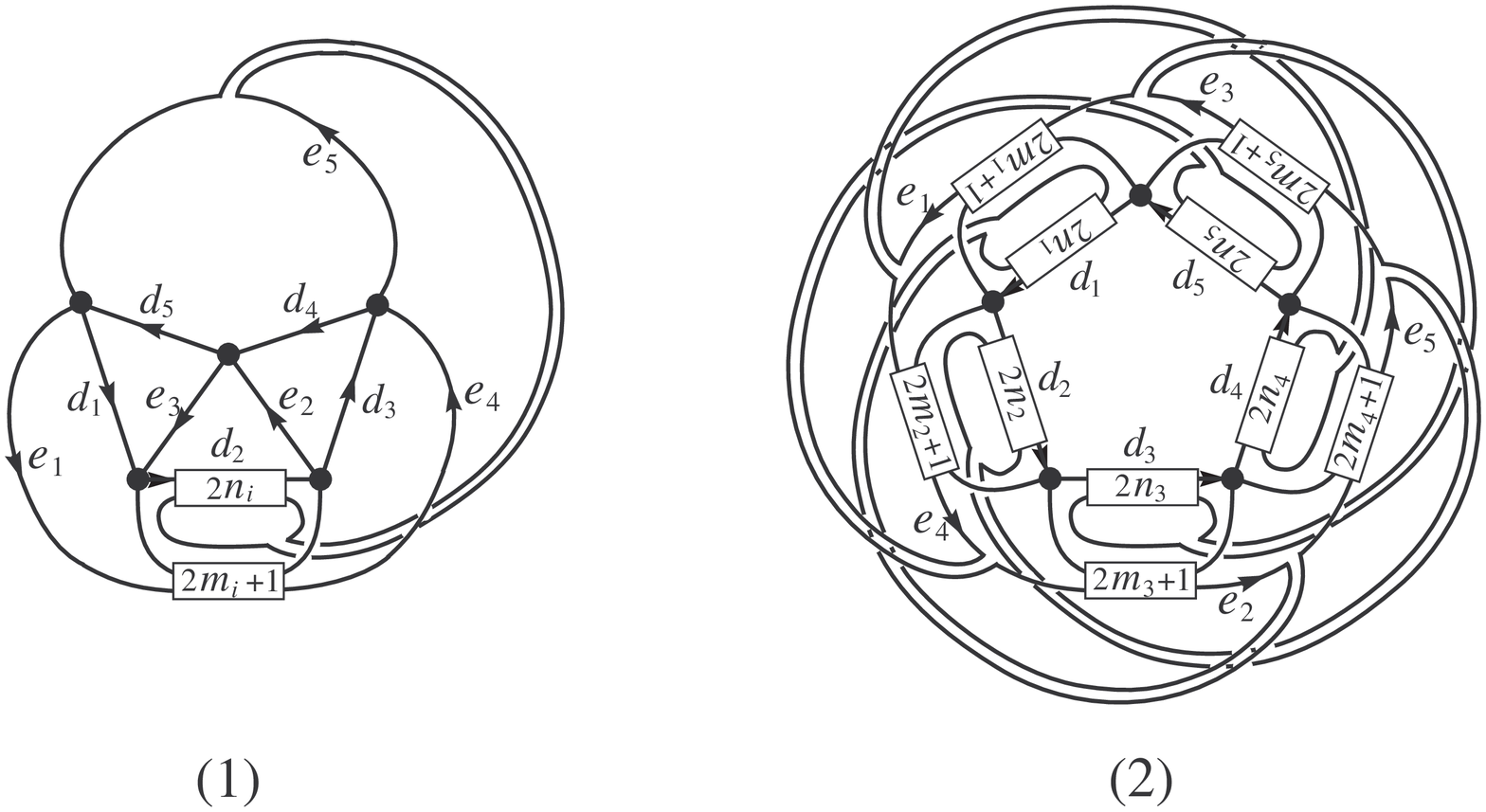}}
      \end{center}
   \caption{}
  \label{K5Simon1}
\end{figure} 
\begin{figure}[htbp]
      \begin{center}
\scalebox{0.4}{\includegraphics*{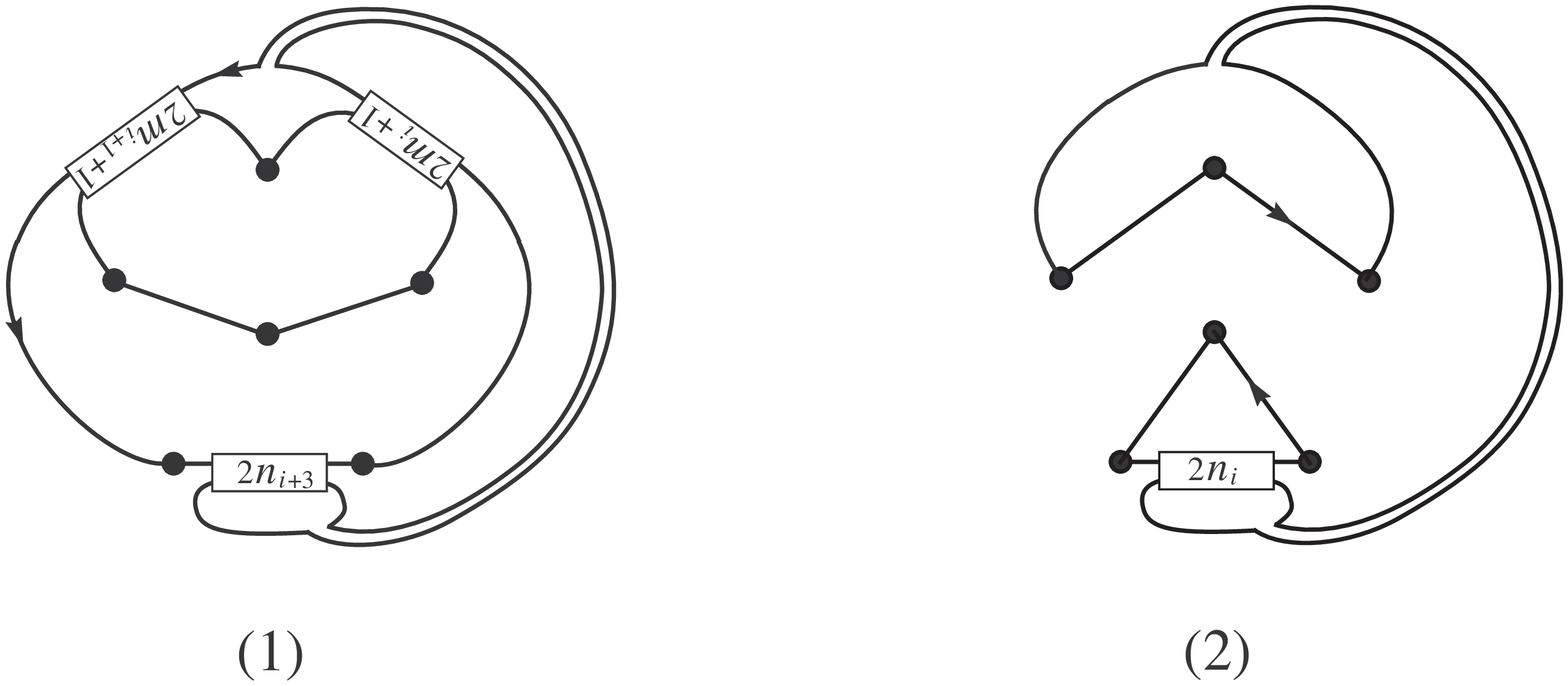}}
      \end{center}
   \caption{}
  \label{lk1}
\end{figure} 

Note that the square of the linking number is an invariant of a non-oriented $2$-component link, and the square of the Simon invariant is also an invariant of non-labeled spatial embeddings of $K_{5}$ and $K_{3,3}$ (see \cite[Lemma 3.2]{NT09}). Thus we may calculate ${\mathcal L}(f|_{H_{i}})^{2}$ and ${\mathcal L}(f|_{G_{i}})^{2}$ by assigning the orientations of edges as illustrated in Fig. \ref{K33Simon1} and \ref{K5Simon1}, respectively, and ${\rm lk}(f(\lambda_{i}))^{2}$ by assigning the orientations of components as illustrated in Fig. \ref{lk1}. Then we have that
\begin{eqnarray*}
{\mathcal L}(h|_{H_{i}})^{2}
&=&
\left\{
-2\left(m_{i}+m_{i+1}+m_{i+2}-n_{i}-n_{i+2}\right)-3
\right\}^{2}\ (i=1,2,\ldots,5),\\
{\mathcal L}(h|_{H_{i+5}})^{2}
&=&
\left\{
2\left(m_{i}-n_{i+3}-n_{i+2}\right)+1
\right\}^{2}\ (i=1,2,\ldots,5), \\
{\mathcal L}(h|_{G_{i}})^{2}
&=&
\left(-2n_{i}+2m_{i}+1\right)^{2}\ (i=1,2,\ldots,5),\\
{\mathcal L}(h|_{G_{6}})^{2}
&=&
\left(
2\sum_{i=1}^{5}m_{i}-2\sum_{i=1}^{5}n_{i}+5
\right)^{2}, \\
{\rm lk}(h(\lambda_{i}))^{2}
&=&
\left(
-m_{i}-m_{i+1}+n_{i+3}-1
\right)^{2}\ (i=1,2,\ldots,5), \\
{\rm lk}(h(\lambda_{i+5}))^{2}
&=&
n_{i}^{2}\ (i=1,2,\ldots,5). 
\end{eqnarray*}
Then by a direct calculation, it is not hard to see that (\ref{f-h}) holds. 
\end{proof}

\begin{proof}[Proof of Theorem \ref{main1}.] 
By Proposition \ref{homo2} (1), we have that 
\begin{eqnarray}
{\mathcal L}(f|_{G_{i}})^{2} &=& 8\alpha_{\omega}(f|_{G_{i}})+1\ (i=1,2,\ldots,6),\label{simon_formula1}\\
{\mathcal L}(f|_{H_{i}})^{2} &=& 8\alpha_{\omega}(f|_{H_{i}})+1\ (i=1,2,\ldots,10). \label{simon_formula2}
\end{eqnarray}
Thus by (\ref{simon_formula1}), (\ref{simon_formula2}) and Lemma \ref{Simon_lemma}, we have that 
\begin{eqnarray}
\sum_{i=1}^{10}{\rm lk}(f(\lambda_{i}))^{2}
&=&\frac{1}{4}\sum_{i=1}^{10}\left\{
8\alpha_{\omega}(f|_{H_{i}})+1
\right\}
-\frac{1}{4}\sum_{i=1}^{6}\left\{
8\alpha_{\omega}(f|_{G_{i}})+1
\right\}\label{alpha_sum}\\
&=&
2\sum_{i=1}^{10}\alpha_{\omega}(f|_{H_{i}})
-2\sum_{i=1}^{6}\alpha_{\omega}(f|_{G_{i}})
+1\nonumber\\
&=&2\sum_{i=1}^{10}
\left(
\sum_{\gamma\in\Gamma_{6}(H_{i})}a_{2}(f(\gamma))
-\sum_{\gamma\in\Gamma_{4}(H_{i})}a_{2}(f(\gamma))
\right)\nonumber\\
&&
-2\sum_{i=1}^{6}
\left(
\sum_{\gamma\in\Gamma_{5}(G_{i})}a_{2}(f(\gamma))
-\sum_{\gamma\in\Gamma_{4}(G_{i})}a_{2}(f(\gamma))
\right)+1. \nonumber
\end{eqnarray}
Then we can see that for any $6$-cycle (resp. $5$-cycle) $\gamma$ of $K_{6}$ there exists exactly one $H_{i}$ (resp. $G_{i}$) such that $\gamma$ is a $6$-cycle of $H_{i}$ (resp. $5$-cycle of $G_{i}$), and 
for any $4$-cycle $\gamma$ of $K_{6}$ there exist exactly two $H_{i}$'s (resp. $G_{i}$'s) such that $\gamma$ is a $4$-cycle of $H_{i}$ (resp. $G_{i}$). Thus we have that 
\begin{eqnarray}
\sum_{i=1}^{10}\sum_{\gamma\in\Gamma_{6}(H_{i})}a_{2}(f(\gamma))
&=&\sum_{\gamma\in \Gamma_{6}(K_{6})}a_{2}(f(\gamma)),\label{alpha_sum2}\\
\sum_{i=1}^{6}\sum_{\gamma\in\Gamma_{5}(G_{i})}a_{2}(f(\gamma))
&=&\sum_{\gamma\in \Gamma_{5}(K_{6})}a_{2}(f(\gamma)),\label{alpha_sum3}\\
\sum_{i=1}^{10}\sum_{\gamma\in\Gamma_{4}(H_{i})}a_{2}(f(\gamma))&=&\sum_{i=1}^{6}\sum_{\gamma\in\Gamma_{4}(G_{i})}a_{2}(f(\gamma))
=2\sum_{\gamma\in \Gamma_{4}(K_{6})}a_{2}(f(\gamma)). \label{alpha_sum4}
\end{eqnarray}
Therefore by (\ref{alpha_sum}), (\ref{alpha_sum2}), (\ref{alpha_sum3}) and (\ref{alpha_sum4}), we have the desired conclusion. 
\end{proof}

Next we give a proof of Theorem \ref{main2}. In the following we denote a path of length $2$ of $K_{7}$ consisting of two edges $\overline{ij}$ and $\overline{jk}$ by $\overline{ijk}$, and the subgraph of $K_{7}$ obtained from $K_{7}$ by deleting the vertex $i$ and all of the edges incident to $i$ by $K_{6}^{(i)}\ (i=1,2,\ldots,7)$. Actually $K_{6}^{(i)}$ is isomorphic to $K_{6}$ for any $i$. 

\begin{proof}[Proof of Theorem \ref{main2}.] 
For $2\le i<j\le 7$, let $F_{ij}$ be the subgraph of $K_{7}$ obtained from $K_{7}$ by deleting the edges $\overline{ij}$ and $\overline{1k}\ (2\le k\le 7,\ k\neq i,j)$. Note that $F_{ij}$ is homeomorphic to $K_{6}$, namely $F_{ij}$ is obtained from the graph isomorphic to $K_{6}$ by subdividing an edge by the vertex $1$, see Fig. \ref{K6subdivide}. 

\begin{figure}[htbp]
      \begin{center}
\scalebox{0.425}{\includegraphics*{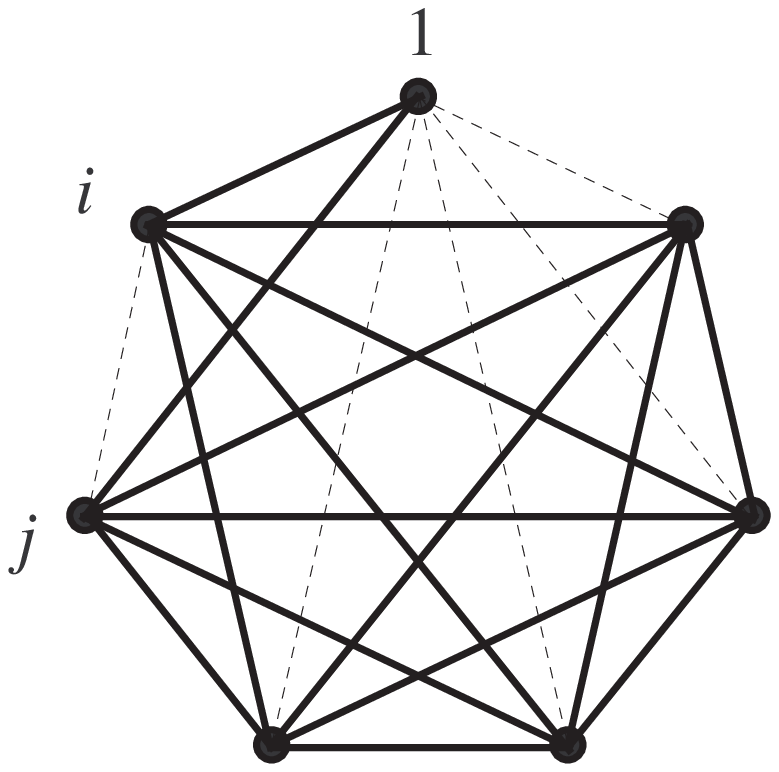}}
      \end{center}
   \caption{}
  \label{K6subdivide}
\end{figure} 

Let $f$ be a spatial embedding of $K_{7}$. Then by applying Theorem \ref{main1} to $f|_{F_{ij}}$, we have that 
\begin{eqnarray}
&&\sum_{\gamma\in\Gamma_{7}(F_{ij})}a_{2}(f(\gamma))
+\sum_{\substack{\gamma\in\Gamma_{6}(F_{ij}) \\ \overline{i1j}\not\subset \gamma}} a_{2}(f(\gamma))
-\sum_{\substack{\gamma\in\Gamma_{6}(F_{ij}) \\ \overline{i1j}\subset \gamma}} a_{2}(f(\gamma))
-\sum_{\substack{\gamma\in\Gamma_{5}(F_{ij}) \\ \overline{i1j}\not\subset \gamma}} a_{2}(f(\gamma))\label{eq1}\\
&=&
\frac{1}{2}\left\{
\sum_{\substack{\lambda=\gamma\cup \gamma'\in\Gamma_{4,3}(F_{ij}) \\ \gamma\in \Gamma_{4}(F_{ij}),\ \gamma'\in \Gamma_{3}(F_{ij}) \\ \overline{i1j}\subset \gamma}} {\rm lk}(f(\lambda))^{2}
+\sum_{\lambda\in\Gamma_{3,3}(F_{ij})} {\rm lk}(f(\lambda))^{2}
-1
\right\}. \nonumber
\end{eqnarray}
Let us take the sum of both sides of (\ref{eq1}) over $2\le i<j\le 7$. For a $7$-cycle $\gamma$ of $K_{7}$, let $i$ and $j$ be the two vertices of $K_{7}$ which are adjacent to $1$ in $\gamma$ ($2\le i<j\le 7$). Then $\gamma$ is a $7$-cycle of $F_{ij}$. This implies that 
\begin{eqnarray}
\sum_{2\le i<j\le 7}\sum_{\gamma\in\Gamma_{7}(F_{ij})}a_{2}(f(\gamma))
&=& \sum_{\gamma\in\Gamma_{7}(K_{7})}a_{2}(f(\gamma)).\label{eq2}
\end{eqnarray}
For a $6$-cycle $\gamma$ of $K_{6}^{(1)}$, let $\overline{ij}$ be an edge of $K_{6}^{(1)}$ which is not contained in $\gamma$. Then $\gamma$ is a $6$-cycle of $F_{ij}$ which does not contain $\overline{i1j}$. Note that there are nine ways to choose such a pair of $i$ and $j$. This implies that 
\begin{eqnarray}
\sum_{2\le i<j\le 7}\sum_{\substack{\gamma\in\Gamma_{6}(F_{ij}) \\ \overline{i1j}\not\subset \gamma}} a_{2}(f(\gamma))
&=& 9\sum_{\gamma\in\Gamma_{6}(K_{6}^{(1)})}a_{2}(f(\gamma)).\label{eq3}
\end{eqnarray}
For a $6$-cycle $\gamma$ of $K_{7}$ which contains the vertex $1$, let $i$ and $j$ be the two vertices of $K_{7}$ which are adjacent to $1$ in $\gamma$. Then $\gamma$ is a $6$-cycle of $F_{ij}$ which contains $\overline{i1j}$. This implies that 
\begin{eqnarray}
\sum_{2\le i<j\le 7}\sum_{\substack{\gamma\in\Gamma_{6}(F_{ij}) \\ \overline{i1j}\subset \gamma}} a_{2}(f(\gamma))
&=& \sum_{\substack{\gamma\in\Gamma_{6}(K_{7}) \\ 1\subset \gamma}} a_{2}(f(\gamma)). \label{eq4}
\end{eqnarray}
For a $5$-cycle $\gamma$ of $K_{6}^{(1)}$, let $\overline{ij}$ be an edge of $K_{6}^{(1)}$ which is not contained in $\gamma$. Then $\gamma$ is a $5$-cycle of $F_{ij}$ which does not contain $\overline{i1j}$. Note that there are ten ways to choose such a pair of $i$ and $j$. This implies that 
\begin{eqnarray}
\sum_{2\le i<j\le 7}\sum_{\substack{\gamma\in\Gamma_{5}(F_{ij}) \\ \overline{i1j}\not\subset \gamma}} a_{2}(f(\gamma))
&=& 10\sum_{\gamma\in\Gamma_{5}(K_{6}^{(1)})}a_{2}(f(\gamma)).\label{eq5}
\end{eqnarray}
For a pair of disjoint cycles $\lambda$ of $K_{7}$ consisting of a $4$-cycle $\gamma$ which contains the vertex $1$ and a $3$-cycle $\gamma'$, let $i$ and $j$ be the two vertices of $K_{7}$ which are adjacent to $1$ in $\gamma$. Then $\lambda$ is a pair of disjoint cycles of $F_{ij}$ consisting of a $4$-cycle $\gamma$ which contains $\overline{i1j}$ and a $3$-cycle $\gamma'$. This implies that 
\begin{eqnarray}
\sum_{2\le i<j\le 7}\sum_{\substack{\lambda=\gamma\cup \gamma'\in\Gamma_{4,3}(F_{ij}) \\ \gamma\in \Gamma_{4}(F_{ij}),\ \gamma'\in \Gamma_{3}(F_{ij}) \\ \overline{i1j}\subset \gamma}} {\rm lk}(f(\lambda))^{2}
&=& \sum_{\substack{\lambda=\gamma\cup \gamma'\in\Gamma_{4,3}(K_{7}) \\ \gamma\in \Gamma_{4}(K_{7}),\ \gamma'\in \Gamma_{3}(K_{7}) \\ 1\subset \gamma}} {\rm lk}(f(\lambda))^{2}.\label{eq6}
\end{eqnarray}
For a pair of disjoint $3$-cycles $\lambda$ of $K_{6}^{(1)}$, let $\overline{ij}$ be an edge of $K_{6}^{(1)}$ which is not contained in $\lambda$. Then $\lambda$ is a pair of disjoint $3$-cycles of $F_{ij}$ which does not contain $\overline{i1j}$. Note that there are nine ways to choose such a pair of $i$ and $j$. This implies that 
\begin{eqnarray}
\sum_{2\le i<j\le 7}\sum_{\substack{\lambda\in\Gamma_{3,3}(F_{ij}) \\ \overline{i1j}\not\subset \lambda}} {\rm lk}(f(\lambda))^{2}
&=& 9\sum_{\lambda\in\Gamma_{3,3}(K_{6}^{(1)})}{\rm lk}(f(\lambda))^{2}.\label{eq7}
\end{eqnarray}
Thus by (\ref{eq1}), (\ref{eq2}), (\ref{eq3}), (\ref{eq4}), (\ref{eq5}), (\ref{eq6}) and (\ref{eq7}), we have that
\begin{eqnarray}
&&\sum_{\gamma\in\Gamma_{7}(K_{7})}a_{2}(f(\gamma))
+9\sum_{\gamma\in\Gamma_{6}(K_{6}^{(1)})}a_{2}(f(\gamma))\label{eq8}\\
&&-\sum_{\substack{\gamma\in\Gamma_{6}(K_{7}) \\ 1\subset \gamma}} a_{2}(f(\gamma))
-10\sum_{\gamma\in\Gamma_{5}(K_{6}^{(1)})}a_{2}(f(\gamma))\nonumber\\
&=& 
\frac{1}{2}\left\{
\sum_{\substack{\lambda=\gamma\cup \gamma'\in\Gamma_{4,3}(K_{7}) \\ \gamma\in \Gamma_{4}(K_{7}),\ \gamma'\in \Gamma_{3}(K_{7}) \\ 1\subset \gamma}} {\rm lk}(f(\lambda))^{2}+9\sum_{\lambda\in\Gamma_{3,3}(K_{6}^{(1)})}{\rm lk}(f(\lambda))^{2}
-15
\right\}\nonumber.
\end{eqnarray}
Then, by applying Theorem \ref{main1} to $f|_{K_{6}^{(1)}}$, we have that 
\begin{eqnarray}
&&9\sum_{\gamma\in\Gamma_{6}(K_{6}^{(1)})}a_{2}(f(\gamma))
-10\sum_{\gamma\in\Gamma_{5}(K_{6}^{(1)})}a_{2}(f(\gamma))\label{eq9}\\
&=& 9\left\{
\sum_{\gamma\in\Gamma_{6}(K_{6}^{(1)})}a_{2}(f(\gamma))
-\sum_{\gamma\in\Gamma_{5}(K_{6}^{(1)})}a_{2}(f(\gamma))
\right\}
-\sum_{\gamma\in\Gamma_{5}(K_{6}^{(1)})}a_{2}(f(\gamma))\nonumber\\
&=& \frac{9}{2}\left\{
\sum_{\lambda\in\Gamma_{3,3}(K_{6}^{(1)})}{\rm lk}(f(\lambda))^{2}
-1
\right\}
-\sum_{\gamma\in\Gamma_{5}(K_{6}^{(1)})}a_{2}(f(\gamma)).\nonumber
\end{eqnarray}
By combining (\ref{eq8}) and (\ref{eq9}), we have that 
\begin{eqnarray*}
&&\sum_{\gamma\in\Gamma_{7}(K_{7})}a_{2}(f(\gamma))
-\sum_{\substack{\gamma\in\Gamma_{6}(K_{7}) \\ 1\subset \gamma}} a_{2}(f(\gamma))
-\sum_{\gamma\in\Gamma_{5}(K_{6}^{(1)})}a_{2}(f(\gamma))\\
&=& 
\frac{1}{2}\left\{
\sum_{\substack{\lambda=\gamma\cup \gamma'\in\Gamma_{4,3}(K_{7}) \\ \gamma\in \Gamma_{4}(K_{7}),\ \gamma'\in \Gamma_{3}(K_{7}) \\ 1\subset \gamma}} {\rm lk}(f(\lambda))^{2}-6
\right\}.
\end{eqnarray*}
Note that this also implies that 
\begin{eqnarray}
&&\sum_{\gamma\in\Gamma_{7}(K_{7})}a_{2}(f(\gamma))
-\sum_{\substack{\gamma\in\Gamma_{6}(K_{7}) \\ i\subset \gamma}} a_{2}(f(\gamma))
-\sum_{\gamma\in\Gamma_{5}(K_{6}^{(i)})}a_{2}(f(\gamma))\label{eq10}\\
&=& 
\frac{1}{2}\left\{
\sum_{\substack{\lambda=\gamma\cup \gamma'\in\Gamma_{4,3}(K_{7}) \\ \gamma\in \Gamma_{4}(K_{7}),\ \gamma'\in \Gamma_{3}(K_{7}) \\ i\subset \gamma}} {\rm lk}(f(\lambda))^{2}-6
\right\}\ (i=1,2,\ldots,7). \nonumber
\end{eqnarray}
Now we take the sum of both sides of (\ref{eq10}) over $i=1,2,\ldots,7$. For a $6$-cycle $\gamma$ of $K_{7}$, let $i$ be a vertex of $K_{7}$ which is contained in $\gamma$. Note that there are six ways to choose such a vertex $i$. This implies that 
\begin{eqnarray}
\sum_{i=1}^{7}\sum_{\substack{\gamma\in\Gamma_{6}(K_{7}) \\ i\subset \gamma}} a_{2}(f(\gamma))
&=& 6\sum_{\gamma\in \Gamma_{6}(K_{7})}a_{2}(f(\gamma)).\label{eq11}
\end{eqnarray}
For a $5$-cycle $\gamma$ of $K_{7}$, let $i$ be a vertex of $K_{7}$ which is not contained in $\gamma$. Then $\gamma$ is a $5$-cycle of $K_{6}^{(i)}$. Note that there are two ways to choose such a vertex $i$. This implies that 
\begin{eqnarray}
\sum_{i=1}^{7}\sum_{\gamma\in\Gamma_{5}(K_{6}^{(i)})}a_{2}(f(\gamma))
&=& 2\sum_{\gamma\in \Gamma_{5}(K_{7})}a_{2}(f(\gamma)).\label{eq12}
\end{eqnarray}
For a pair of disjoint cycles $\lambda$ of $K_{7}$ which consisting of a $4$-cycle $\gamma$ and a $3$-cycle $\gamma'$, let $i$ be a vertex of $K_{7}$ which is contained in $\gamma$. Note that there are four ways to choose such a vertex $i$. This implies that 
\begin{eqnarray}
\sum_{i=1}^{7}\sum_{\substack{\lambda=\gamma\cup \gamma'\in\Gamma_{4,3}(K_{7}) \\ \gamma\in \Gamma_{4}(K_{7}),\ \gamma'\in \Gamma_{3}(K_{7}) \\ i\subset \gamma}} {\rm lk}(f(\lambda))^{2}
&=& 4\sum_{\lambda\in \Gamma_{4,3}(K_{7})}{\rm lk}(f(\lambda))^{2}. \label{eq13}
\end{eqnarray}
Finally, by combining (\ref{eq10}), (\ref{eq11}), (\ref{eq12}) and (\ref{eq13}), we have the desired conclusion. 
\end{proof}

To prove Corollary \ref{main3}, we show the following lemma. 

\begin{Lemma}\label{main3lemma} 
For any spatial embedding $f$ of $K_{7}$, we have that  
\begin{eqnarray*}
2\left\{
\sum_{\gamma\in \Gamma_{6}(K_{7})}a_{2}(f(\gamma))
-2\sum_{\gamma\in \Gamma_{5}(K_{7})}a_{2}(f(\gamma))
\right\}
=
\sum_{\lambda\in \Gamma_{3,3}(K_{7})}{{\rm lk}(f(\lambda))}^{2}-7.
\end{eqnarray*}
\end{Lemma}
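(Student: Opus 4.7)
The plan is to apply Theorem \ref{main1} to each of the seven subgraphs $K_{6}^{(i)}$ of $K_{7}$ (for $i=1,2,\ldots,7$) and then sum the resulting identities over $i$. The counting is analogous in spirit to the averaging step in the proof of Theorem \ref{main2}, but simpler, because each of the relevant cycles/pairs in $K_7$ lives in a predictable number of the $K_6^{(i)}$.

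Concretely, for each $i$, Theorem \ref{main1} applied to $f|_{K_{6}^{(i)}}$ gives
\begin{eqnarray*}
2\left\{\sum_{\gamma\in \Gamma_{6}(K_{6}^{(i)})}a_{2}(f(\gamma))-\sum_{\gamma\in \Gamma_{5}(K_{6}^{(i)})}a_{2}(f(\gamma))\right\} = \sum_{\lambda\in \Gamma_{3,3}(K_{6}^{(i)})}{\rm lk}(f(\lambda))^{2}-1.
\end{eqnarray*}
Then I would sum both sides over $i=1,2,\ldots,7$ and identify each summand as a known global sum over $K_7$.

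The three counting facts I would verify are: (i) every $6$-cycle $\gamma$ of $K_{7}$ lies in $\Gamma_{6}(K_{6}^{(i)})$ for exactly one $i$ (namely the unique vertex of $K_{7}$ not in $\gamma$), so $\sum_{i=1}^{7}\sum_{\gamma\in \Gamma_{6}(K_{6}^{(i)})}a_{2}(f(\gamma)) = \sum_{\gamma\in \Gamma_{6}(K_{7})}a_{2}(f(\gamma))$; (ii) every $5$-cycle $\gamma$ of $K_{7}$ lies in $\Gamma_{5}(K_{6}^{(i)})$ for exactly $7-5=2$ values of $i$, giving a factor of $2$; and (iii) every pair $\lambda\in \Gamma_{3,3}(K_{7})$ uses exactly six vertices, so it lies in $\Gamma_{3,3}(K_{6}^{(i)})$ for the unique missing vertex $i$. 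The constant on the right contributes $-7$ after summing.

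Nothing here is delicate: the only potential obstacle is confirming the combinatorial multiplicities above, but each follows immediately from the elementary observation that a cycle/subgraph of $K_{7}$ survives in $K_{6}^{(i)}$ if and only if $i$ is not one of its vertices. Combining the three identifications with the summed right-hand side yields precisely
\begin{eqnarray*}
2\left\{\sum_{\gamma\in \Gamma_{6}(K_{7})}a_{2}(f(\gamma))-2\sum_{\gamma\in \Gamma_{5}(K_{7})}a_{2}(f(\gamma))\right\} = \sum_{\lambda\in \Gamma_{3,3}(K_{7})}{\rm lk}(f(\lambda))^{2}-7,
\end{eqnarray*}
as desired.
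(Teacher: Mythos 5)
Your proposal is correct and is essentially the paper's own proof: the paper likewise applies Theorem \ref{main1} to each $f|_{K_{6}^{(i)}}$, sums over $i=1,\ldots,7$, and uses exactly the multiplicities you state (each $6$-cycle and each $\Gamma_{3,3}$-pair in one $K_{6}^{(i)}$, each $5$-cycle in two) to obtain the identity.
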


\begin{proof}
By applying Theorem \ref{main1} to $f|_{K_{6}^{(i)}}$, we have that 
\begin{eqnarray}
&&2\left\{
\sum_{\gamma\in \Gamma_{6}(K_{6}^{(i)})}a_{2}(f(\gamma))
-\sum_{\gamma\in \Gamma_{5}(K_{6}^{(i)})}a_{2}(f(\gamma))
\right\}\label{eq14}\\
&=&
\sum_{\lambda\in \Gamma_{3,3}(K_{6}^{(i)})}{{\rm lk}(f(\lambda))}^{2}-1.\nonumber
\end{eqnarray}
Now we take the sum of both sides of (\ref{eq14}) over $i=1,2,\ldots,7$. It is clear that 
\begin{eqnarray}
\sum_{i=1}^{7}\sum_{\gamma\in \Gamma_{6}(K_{6}^{(i)})}a_{2}(f(\gamma))
&=&\sum_{\gamma\in \Gamma_{6}(K_{7})}a_{2}(f(\gamma)),\label{eq15}\\
\sum_{i=1}^{7}\sum_{\gamma\in \Gamma_{5}(K_{6}^{(i)})}a_{2}(f(\gamma))
&=&2\sum_{\gamma\in \Gamma_{5}(K_{7})}a_{2}(f(\gamma)),\label{eq16}\\
\sum_{i=1}^{7}\sum_{\lambda\in \Gamma_{3,3}(K_{6}^{(i)})}{{\rm lk}(f(\lambda))}^{2}
&=&\sum_{\lambda\in \Gamma_{3,3}(K_{7})}{{\rm lk}(f(\lambda))}^{2}.\label{eq17}
\end{eqnarray}
Thus by combining (\ref{eq14}), (\ref{eq15}), (\ref{eq16}) and (\ref{eq17}), we have the result. 
\end{proof}

\begin{proof}[Proof of Corollary \ref{main3}.] 
This follows from Theorem \ref{main2} and Lemma \ref{main3lemma}. 
\end{proof}

\section{Applications to rectilinear spatial graphs} 

In this section we give a proof of Theorem \ref{main4}, and an alternative proof of Theorem \ref{HJ}. To prove Theorem \ref{main4}, we need the following result 

\begin{Lemma}\label{K7link} {\rm (Fleming-Mellor \cite{FM09})} 
For any spatial embedding $f$ of $K_{7}$, there exist seven pairs of disjoint $3$-cycles $\lambda_{1},\lambda_{2},\ldots,\lambda_{7}$ of $K_{7}$ and fourteen pairs of disjoint cycles $\lambda_{8},\lambda_{9},\ldots,\lambda_{21}$ of $K_{7}$ each of which consists of a $4$-cycle and a $3$-cycle such that ${\rm lk}(f(\lambda_{i}))\equiv 1 \pmod{2}$ for $i=1,2,\ldots,21$. 
\end{Lemma}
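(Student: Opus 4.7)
The plan is to deduce the lemma by two applications of Conway-Gordon's Theorem \ref{CG1} for $K_6$ to appropriate $K_6$-type subgraphs inside $K_7$.

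For the seven pairs of disjoint $3$-cycles $\lambda_{1},\ldots,\lambda_{7}$, I would apply Theorem \ref{CG1} to each of the seven induced subgraphs $f|_{K_6^{(i)}}$ for $i=1,\ldots,7$. Each such application gives an odd sum $\sum_{\lambda\in\Gamma_{3,3}(K_6^{(i)})}{\rm lk}(f(\lambda))$, and hence produces a pair $\lambda_i\in\Gamma_{3,3}(K_6^{(i)})$ with ${\rm lk}(f(\lambda_i))$ odd. Any pair of disjoint $3$-cycles in $K_7$ uses exactly six vertices and therefore lies in the unique $K_6^{(i)}$ omitting its unused vertex, so pairs drawn from distinct $K_6^{(i)}$'s are automatically distinct. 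This yields the desired seven odd-linked $(3,3)$-pairs.

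For the fourteen pairs $\lambda_8,\ldots,\lambda_{21}$ of disjoint $(4,3)$-cycles, my plan is to apply Theorem \ref{CG1} to the family of $K_6$-subdivisions $F_{ij}^{(v)}$ that already appear in the proof of Theorem \ref{main2}. For each vertex $v$ of $K_7$ and each edge $\overline{ij}$ of $K_6^{(v)}$, $F_{ij}^{(v)}$ is the subgraph obtained from $K_7$ by deleting $\overline{ij}$ together with all edges $\overline{vk}$ with $k\neq i,j$; topologically it is $K_6$ with edge $\overline{ij}$ subdivided by $v$. A pair of disjoint $3$-cycles in $F_{ij}^{(v)}$ corresponds in $K_7$ either to a $(3,3)$-pair in $K_6^{(v)}$ avoiding $\overline{ij}$, or to a $(4,3)$-pair whose $4$-cycle uses the path $\overline{ivj}$. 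By Theorem \ref{CG1}, each of the $7\times 15=105$ such subdivisions $F_{ij}^{(v)}$ has at least one odd-linked pair, and one aims to assemble the required $14$ $(4,3)$-pairs from those contributions of the second type, noting the multiplicities $9$ for $(3,3)$-pairs (one per unused edge of $K_6^{(v)}$) and $4$ for $(4,3)$-pairs (one per vertex of the $4$-cycle).

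The main obstacle is the quantitative count. The crude inequality $9N_{33}+4N_{43}\geq 105$ obtained by summing the individual Conway-Gordon lower bounds, combined with the bound $N_{33}\geq 7$ already established, yields only $N_{43}\geq 11$. Reaching the required $N_{43}\geq 14$ demands a sharper vertex-by-vertex parity analysis: for each $v$ the number $N_{33}(v)$ of odd-linked $(3,3)$-pairs in $K_6^{(v)}$ is odd, and the Conway-Gordon congruence on $F_{ij}^{(v)}$ forces an odd-linked $(4,3)$-pair whenever the $(3,3)$-contribution vanishes modulo $2$. Refining how this parity condition propagates across the $15$ edges of $K_6^{(v)}$, for each of the $7$ vertices $v$, constitutes the technical heart of the argument; this is supplied by Fleming-Mellor \cite{FM09}.
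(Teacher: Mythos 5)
Your first half is sound and self-contained: Theorem \ref{CG1} applied to each of the seven subgraphs $K_{6}^{(i)}$ produces an odd-linked pair in $\Gamma_{3,3}(K_{6}^{(i)})$, and since a pair of disjoint $3$-cycles of $K_{7}$ spans six vertices and hence lies in exactly one $K_{6}^{(i)}$, these seven pairs are distinct. (For the record, the paper gives no proof of this lemma at all; it is quoted from \cite{FM09} and used as a black box in the proof of Theorem \ref{main4}, so there is no internal argument to compare with.)

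The genuine gap is the second half: the existence of fourteen odd-linked pairs in $\Gamma_{4,3}(K_{7})$, which is most of the content of the lemma, is not established by your proposal --- you concede as much and hand the ``technical heart'' back to \cite{FM09}. Two concrete problems with the route you sketch. First, the global inequality $9N_{33}+4N_{43}\ge 105$ bounds $N_{43}$ from below only if $N_{33}$ is bounded \emph{above}; with only $N_{33}\ge 7$ available it gives nothing (your figure $N_{43}\ge 11$ presumes $N_{33}=7$ exactly). Second, the proposed mod~$2$ refinement cannot close the gap even in principle: for a fixed subdividing vertex $v$, an odd-linked $(3,3)$-pair of $K_{6}^{(v)}$ avoids exactly $9$ of the $15$ edges $\overline{ij}$, while an odd-linked $(4,3)$-pair whose $4$-cycle passes through $v$ occurs in exactly one of the subdivisions, so summing the Conway--Gordon congruences over the fifteen subdivisions with subdividing vertex $v$ gives $9\sum_{\lambda\in\Gamma_{3,3}(K_{6}^{(v)})}{\rm lk}(f(\lambda))+\sum_{\lambda}{\rm lk}(f(\lambda))\equiv 15\pmod 2$, the second sum running over the $(4,3)$-pairs whose $4$-cycle contains $v$; since the first sum is odd by Theorem \ref{CG1}, this only says that the number of odd-linked $(4,3)$-pairs through $v$ is \emph{even}, which is consistent with zero. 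Thus parity bookkeeping over the $F_{ij}$'s alone cannot force even one $(4,3)$-link, let alone fourteen; one needs an actual counting argument of the kind Fleming--Mellor give, for instance: for an odd-linked pair $T_{1}\cup T_{2}\subset K_{6}^{(v)}$ and an edge $e$ of $T_{1}$, the homological identity ${\rm lk}(f(T_{1}\cup T_{2}))\equiv{\rm lk}(f(T_{e}\cup T_{2}))+{\rm lk}(f(Q_{e}\cup T_{2}))\pmod 2$, where $T_{e}$ is the cone of $e$ over $v$ and $Q_{e}$ the complementary $4$-cycle on the vertices of $T_{1}$ and $v$, forces for each such $e$ a new odd-linked $(3,3)$- or $(4,3)$-link containing $v$, and the fourteen $(4,3)$-links then come out of a careful multiplicity count of these contributions. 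That count is exactly what is missing from your proposal.
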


Lemma \ref{K7link} implies that for any spatial embedding $f$ of $K_{7}$, $f(K_{7})$ contains at least twenty one non-splittable $2$-component links. Then we have the following. 

\begin{Lemma}\label{recti_key} 
Let $f$ be a spatial embedding of $K_{7}$. Then we have that 
\begin{eqnarray*}
\sum_{\gamma\in \Gamma_{7}(K_{7})}a_{2}(f(\gamma))
-2\sum_{\gamma\in \Gamma_{5}(K_{7})}a_{2}(f(\gamma))
\ge 1. 
\end{eqnarray*}
\end{Lemma}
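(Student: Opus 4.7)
The plan is to read off the inequality directly from formula (\ref{cor2_2}) of Corollary \ref{main3} together with the parity information supplied by Lemma \ref{K7link}. Rewriting (\ref{cor2_2}), what we must prove, namely
\[
\sum_{\gamma\in \Gamma_{7}(K_{7})}a_{2}(f(\gamma))-2\sum_{\gamma\in \Gamma_{5}(K_{7})}a_{2}(f(\gamma))\ge 1,
\]
is equivalent (after multiplying by $7$ and substituting the right-hand side of (\ref{cor2_2})) to the assertion
\[
2\sum_{\lambda\in \Gamma_{4,3}(K_{7})}{\rm lk}(f(\lambda))^{2}+3\sum_{\lambda\in \Gamma_{3,3}(K_{7})}{\rm lk}(f(\lambda))^{2}\ge 49.
\]
So the proof reduces to a lower bound on the two sums of squared linking numbers.

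That lower bound is exactly what Lemma \ref{K7link} provides: for any spatial embedding $f$ of $K_{7}$ there are at least seven elements $\lambda\in\Gamma_{3,3}(K_{7})$ and at least fourteen elements $\lambda\in\Gamma_{4,3}(K_{7})$ with ${\rm lk}(f(\lambda))$ odd, hence with ${\rm lk}(f(\lambda))^{2}\ge 1$. Dropping the contribution of all the remaining pairs (which is nonnegative) yields
\[
\sum_{\lambda\in \Gamma_{3,3}(K_{7})}{\rm lk}(f(\lambda))^{2}\ge 7,\qquad
\sum_{\lambda\in \Gamma_{4,3}(K_{7})}{\rm lk}(f(\lambda))^{2}\ge 14,
\]
so that
\[
2\sum_{\lambda\in \Gamma_{4,3}(K_{7})}{\rm lk}(f(\lambda))^{2}+3\sum_{\lambda\in \Gamma_{3,3}(K_{7})}{\rm lk}(f(\lambda))^{2}\ge 2\cdot 14+3\cdot 7=49,
\]
which is precisely what we needed.

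There is no real obstacle here, since all of the work has been done in Corollary \ref{main3} and Lemma \ref{K7link}; the only care required is in matching coefficients. One might worry that $49-42=7$ leaves no slack, but dividing by $7$ gives exactly the stated bound $\ge 1$, and this is consistent with Theorem \ref{CG2} modulo $2$ (the sum $\sum_{\gamma\in\Gamma_{7}(K_7)}a_2(f(\gamma))-2\sum_{\gamma\in\Gamma_5(K_7)}a_2(f(\gamma))$ is odd, so once it is known to be nonnegative it is at least $1$). Thus the whole lemma is essentially a bookkeeping consequence of the integral Conway--Gordon formula (\ref{cor2_2}) combined with Fleming--Mellor's count of non-split constituent links in any spatial $K_{7}$.
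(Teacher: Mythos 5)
Your proof is correct and is essentially the paper's own argument: both derive the inequality by substituting the Fleming--Mellor bounds $\sum_{\Gamma_{4,3}}{\rm lk}^{2}\ge 14$ and $\sum_{\Gamma_{3,3}}{\rm lk}^{2}\ge 7$ from Lemma \ref{K7link} into formula (\ref{cor2_2}). The concluding parity remark is harmless but unnecessary, since the bound $2\cdot 14+3\cdot 7=49$ already gives the result directly.
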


\begin{proof}
By Lemma \ref{K7link}, we have that 
\begin{eqnarray}
\sum_{\lambda\in \Gamma_{4,3}(K_{7})}{{\rm lk}(f(\lambda))}^{2}
&\ge& 14,\label{14}\\
\sum_{\lambda\in \Gamma_{3,3}(K_{7})}{{\rm lk}(f(\lambda))}^{2}
&\ge& 7.\label{7}
\end{eqnarray}
Then by (\ref{14}), (\ref{7}) and (\ref{cor2_2}), we have that 
\begin{eqnarray*}
\sum_{\gamma\in \Gamma_{7}(K_{7})}a_{2}(f(\gamma))
-2\sum_{\gamma\in \Gamma_{5}(K_{7})}a_{2}(f(\gamma))
\ge \frac{1}{7}\left(
2\cdot 14 + 3\cdot 7
\right)
-6
=1. 
\end{eqnarray*}
This completes the proof. 
\end{proof}

\begin{proof}[Proof of Theorem \ref{main4}.] 
Let $f$ be a rectilinear spatial embedding of $K_{7}$. Then by Proposition \ref{stick} (1), if $k\le 5$ then $f(\gamma)$ is a trivial knot for any $k$-cycle $\gamma$ of $K_{7}$. Since the Conway polynomial of a trivial knot is equal to $1$, we have that 
\begin{eqnarray}\label{eq21}
\sum_{\gamma\in \Gamma_{5}(K_{7})}a_{2}(f(\gamma))=0. 
\end{eqnarray}
Therefore by (\ref{eq21}) and Lemma \ref{recti_key}, we have that $\sum_{\gamma\in \Gamma_{7}(K_{7})}a_{2}(f(\gamma))\ge 1$. As we have already seen in Theorem \ref{main2}, $\sum_{\gamma\in \Gamma_{7}(K_{7})}a_{2}(f(\gamma))$ is an odd integer. Thus we have that $\sum_{\gamma\in \Gamma_{7}(K_{7})}a_{2}(f(\gamma))$ is a positive odd integer. Next we show the latter half of the theorem. First we show the `if' part. Assume that the non-trivial $2$-component links in $f(K_{7})$ are exactly twenty one Hopf links. Since the linking number of a Hopf link is equal to $\pm 1$, by Lemma \ref{K7link}, we have that 
\begin{eqnarray}
\sum_{\lambda\in \Gamma_{4,3}(K_{7})}{{\rm lk}(f(\lambda))}^{2}
&=& 14,\label{eq19}\\
\sum_{\lambda\in \Gamma_{3,3}(K_{7})}{{\rm lk}(f(\lambda))}^{2}
&=& 7.\label{eq20}
\end{eqnarray}
Then by (\ref{eq21}), (\ref{eq19}), (\ref{eq20}) and (\ref{cor2_2}), we have that $\sum_{\gamma\in \Gamma_{7}(K_{7})}a_{2}(f(\gamma))=1$. Next we show the `only if' part. Assume that $\sum_{\gamma\in \Gamma_{7}(K_{7})}a_{2}(f(\gamma))=1$. Note that any non-trivial $2$-component link in $f(K_{7})$ is either a Hopf link or a $(2,4)$-torus link by Proposition \ref{stick} (2). Let $n_{4,3}(4_{1}^{2})$, $n_{4,3}(2_{1}^{2})$ and $n_{3,3}(2_{1}^{2})$ be the number of $(2,4)$-torus links in $f(K_{7})$ each of which is the image of $\lambda\in \Gamma_{4,3}(K_{7})$, the number of Hopf links in $f(K_{7})$ each of which is the image of $\lambda\in \Gamma_{4,3}(K_{7})$ and the number of Hopf links in $f(K_{7})$ each of which is the image of $\lambda\in \Gamma_{3,3}(K_{7})$, respectively. Note that the linking number of a $(2,4)$-torus link is equal to $\pm 2$. Thus by Lemma \ref{K7link} we have that 
\begin{eqnarray}
n_{4,3}(2_{1}^{2})&\ge& 14,\label{eq22}\\
n_{3,3}(2_{1}^{2})&\ge& 7.\label{eq23}
\end{eqnarray} 
Then by (\ref{eq22}), (\ref{eq23}) and (\ref{cor2_2}), we have that 
\begin{eqnarray*}
1&=&\frac{2\left\{4n_{4,3}(4_{1}^{2})+n_{4,3}(2_{1}^{2})\right\}+3n_{3,3}(2_{1}^{2})}{7}-6\\
&\ge& \frac{2\left\{4n_{4,3}(4_{1}^{2})+14\right\}+21}{7}-6\\
&=& 1+\frac{8}{7}n_{4,3}(4_{1}^{2}). 
\end{eqnarray*}
This implies that $n_{4,3}(4_{1}^{2})=0$, namely the non-trivial $2$-component links in $f(K_{7})$ are only Hopf links. Thus we have that 
\begin{eqnarray}
n_{4,3}(2_{1}^{2})&=&\sum_{\lambda\in \Gamma_{4,3}(K_{7})}{{\rm lk}(f(\lambda))}^{2},\label{eq25}\\
n_{3,3}(2_{1}^{2})&=&\sum_{\lambda\in \Gamma_{3,3}(K_{7})}{{\rm lk}(f(\lambda))}^{2}.\label{eq26}
\end{eqnarray}
Then by combining (\ref{eq25}), (\ref{eq26}) and (\ref{cor2_2}), we have that 
\begin{eqnarray}\label{eq27}
49=2n_{4,3}(2_{1}^{2})+3n_{3,3}(2_{1}^{2}).
\end{eqnarray}
Clearly, (\ref{eq22}), (\ref{eq23}) and (\ref{eq27}) imply that $n_{4,3}(2_{1}^{2})=14$ and $n_{3,3}(2_{1}^{2})=7$. Therefore the non-trivial $2$-component links in $f(K_{7})$ are exactly twenty one Hopf links.
\end{proof}

\begin{Example}\label{ex} 
{\rm 
In the following, we denote a $k$-cycle $\overline{i_{1}i_{2}}\cup \overline{i_{2}i_{3}}\cup \cdots \cup \overline{i_{k-1}i_{k}}\cup \overline{i_{k}i_{1}}$ of $K_{7}$ by $[i_{1}i_{2}\cdots i_{k}]$. In \cite{CG83}, Conway-Gordon exhibited the spatial embedding $f$ of $K_{7}$ which contains exactly one trefoil knot as the unique non-trivial knot in $f(K_{7})$. Then $f$ also may be realized by a rectilinear spatial embedding of $K_{7}$ as illustrated in Fig. \ref{K7_knot}. Actually, $f([1357246])$ is a trefoil knot. 
\begin{figure}[htbp]
      \begin{center}
\scalebox{0.425}{\includegraphics*{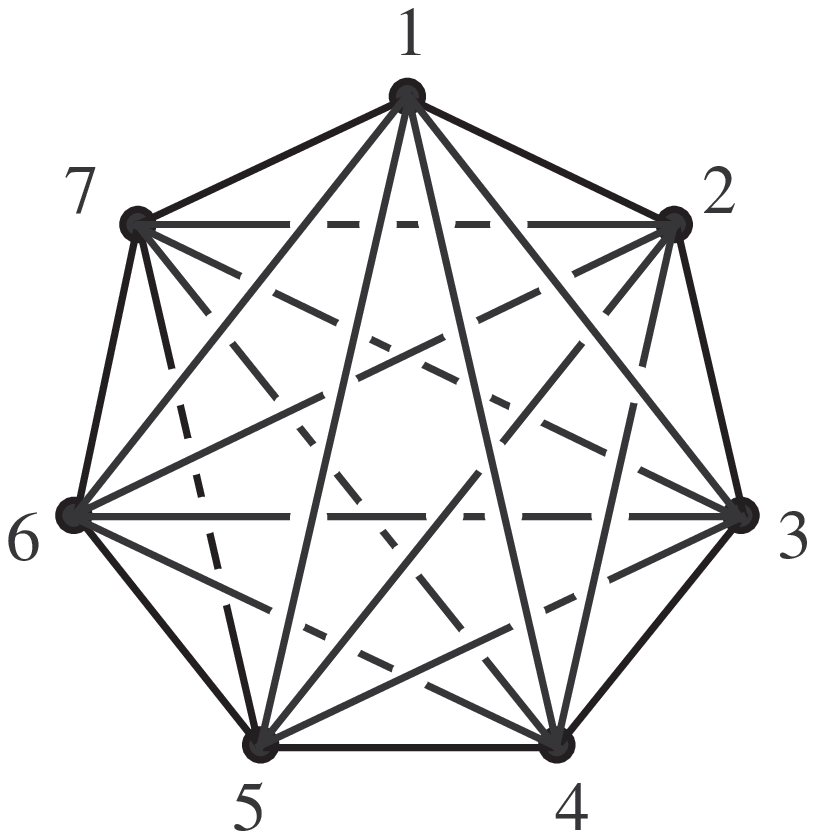}}
      \end{center}
   \caption{}
  \label{K7_knot}
\end{figure} 

Since $\sum_{\gamma\in \Gamma_{7}(K_{7})}a_{2}(f(\gamma))=1$, by Theorem \ref{main4}, we have that the non-trivial $2$-component links in $f(K_{7})$ are exactly twenty one Hopf links. Actually all of the Hopf links in $f(K_{7})$ are as follows: 
\begin{eqnarray*}
&&f([135]\cup [246]),\ f([135]\cup [247]),\ f([136]\cup [247]),\ f([136]\cup [257]),\\
&&f([146]\cup [257]),\ f([146]\cup [357]),\ f([246]\cup [357]),\\
&&f([1246]\cup [357]),\ f([2357]\cup [146]),\ f([1346]\cup [257]),\ f([2457]\cup [136]),\\
&&f([1356]\cup [247]),\ f([2467]\cup [135]),\ f([1357]\cup [246]),\ f([1264]\cup [357]),\\
&&f([2375]\cup [146]),\ f([1436]\cup [257]),\ f([2547]\cup [136]),\ f([1365]\cup [247]),\\
&&f([2476]\cup [135]),\ f([1537]\cup [246]).
\end{eqnarray*}
}
\end{Example}

Next, we give an alternative proof of Theorem \ref{HJ}.

\begin{proof}[Proof of Theorem \ref{HJ}.] 
Let $f$ be a rectilinear spatial embedding of $K_{6}$. As in the proof of (\ref{eq21}), we have that 
\begin{eqnarray}\label{eq28}
\sum_{\gamma\in \Gamma_{5}(K_{7})}a_{2}(f(\gamma))=0. 
\end{eqnarray}
Therefore by (\ref{eq28}) and Theorem \ref{main1}, we have that 
\begin{eqnarray}\label{eq29}
2\sum_{\gamma\in \Gamma_{6}(K_{6})}a_{2}(f(\gamma))
=
\sum_{\lambda\in \Gamma_{3,3}(K_{6})}{{\rm lk}(f(\lambda))}^{2}-1.
\end{eqnarray}

Note that non-trivial knots in $f(K_{6})$ are only trefoil knots, and non-trivial $2$-component links in $f(K_{6})$ are only Hopf links by Proposition \ref{stick}. Let $n_{6}(3_{1})$ and $n_{3,3}(2_{1}^{2})$ be the number of trefoil knots in $f(K_{6})$ each of which is the image of $\lambda\in \Gamma_{6}(K_{6})$ and the number of Hopf links in $f(K_{6})$ each of which is the image of $\lambda\in \Gamma_{3,3}(K_{6})$, respectively. Then we have that
\begin{eqnarray}
n_{6}(3_{1})&=& \sum_{\gamma\in\Gamma_{6}(K_{6})}a_{2}(f(\gamma)),\label{eq30}\\
n_{3,3}(2_{1}^{2})&=& \sum_{\lambda\in \Gamma_{3,3}(K_{6})}{\rm lk}(f(\lambda))^{2}.\label{eq31}
\end{eqnarray} 
As we have already seen in Theorem \ref{main1}, $\sum_{\gamma\in \Gamma_{3,3}(K_{6})}{\rm lk}(f(\lambda))^{2}$ is a positive odd integer. Since $\sharp \Gamma_{3,3}(K_{6})=10$, by (\ref{eq30}), (\ref{eq31}) and (\ref{eq29}), we have that 
\begin{eqnarray}\label{eq32}
(n_{6}(3_{1}),n_{3,3}(2_{1}^{2}))=
(0,1),\ (1,3),\ (2,5),\ (3,7)\ {\rm or}\ (4,9). 
\end{eqnarray}
Assume that $(n_{6}(3_{1}),n_{3,3}(2_{1}^{2}))=(2,5)$. We denote the pairs of disjoint $3$-cycles of $K_{6}$ by $\lambda_{1},\lambda_{2},\ldots,\lambda_{10}$. Then we can see that $\Lambda_{ij}=\lambda_{i}\cup \lambda_{j}$ is isomorphic to the graph illustrated in Fig. \ref{lambda_ij} if $i\neq j$. Note that we can obtain $D_{4}$ by contracting the two edges $e,e'$ of $\Lambda_{ij}$. Then there exists a natural injection
\begin{eqnarray*}
\varphi_{ij}: \Gamma_{4}(D_{4}) \longrightarrow \Gamma(\Lambda_{ij})
\end{eqnarray*}
and a natural bijection 
\begin{eqnarray*}
\psi_{ij}: \Gamma_{2,2}(D_{4}) \longrightarrow \Gamma_{3,3}(\Lambda_{ij}). 
\end{eqnarray*}

\begin{figure}[htbp]
      \begin{center}
\scalebox{0.425}{\includegraphics*{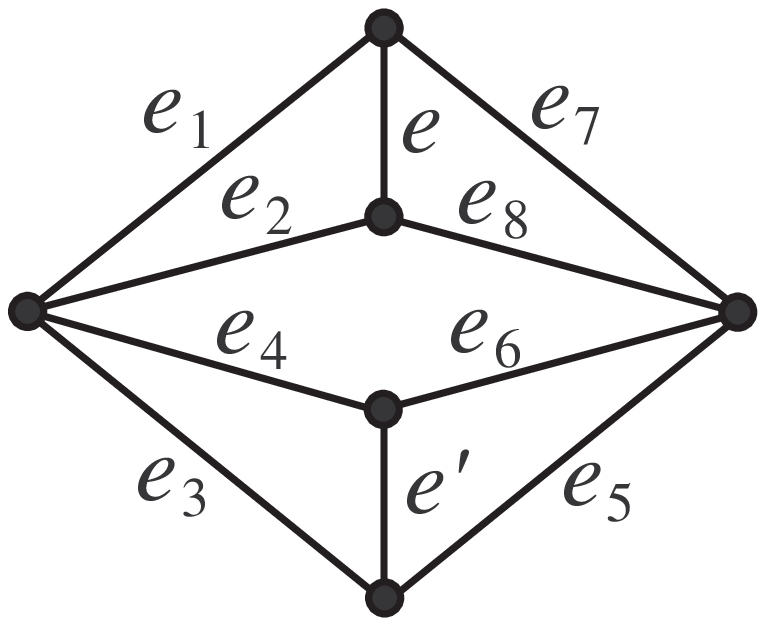}}
      \end{center}
   \caption{}
  \label{lambda_ij}
\end{figure} 

For a spatial embedding $f|_{\Lambda_{ij}}$ of $\Lambda_{ij}$, there exists a spatial embedding $\bar{f}$ of $D_{4}$ such that $\bar{f}(D_{4})$ is obtained from $f(\Lambda_{ij})$ by contracting each of $f(e)$ and $f(e')$ into one point. Note that this embedding is unique up to ambient isotopy in ${\mathbb R}^{3}$. We say that {\it $\bar{f}$ is naturally induced from $f|_{\Lambda_{ij}}$}. Note that for any $4$-cycle $\gamma$ and a pair of disjoint $2$-cycles $\lambda$ of $D_{4}$, $f(\varphi_{ij}(\gamma))$ is ambient isotopic to $\bar{f}(\gamma)$, and $f(\psi_{ij}(\lambda))$ is ambient isotopic to $\bar{f}(\lambda)$. Now we define 
\begin{eqnarray*}
\alpha(f|_{\Lambda_{ij}})=
\sum_{\gamma\in\Gamma(D_{4})}\omega(\gamma)a_{2}(f(\varphi_{ij}(\gamma))), 
\end{eqnarray*}
where $\omega:\Gamma(D_{4})\to {\mathbb Z}$ is the map defined in section $2$. Then, by Proposition \ref{homo2} (2), we have that 
\begin{eqnarray}\label{eq34}
\left|\alpha(f|_{\Lambda_{ij}})\right|
&=&
\left|
\sum_{\gamma\in\Gamma(D_{4})}\omega(\gamma)a_{2}(f(\varphi_{ij}(\gamma)))
\right| \\
&=&
\left|
\sum_{\gamma\in\Gamma(D_{4})}\omega(\gamma)a_{2}(\bar{f}(\gamma))
\right|\nonumber\\
&=& \left|
{\rm lk}(\bar{f}(\lambda)){\rm lk}(\bar{f}(\lambda'))
\right|\nonumber\\
&=& \left|
{\rm lk}(f(\psi_{ij}(\lambda))){\rm lk}(f(\psi_{ij}(\lambda')))
\right|.\nonumber
\end{eqnarray}
If both $f(\lambda_{i})$ and $f(\lambda_{j})$ are Hopf links, then by (\ref{eq34}), we have that 
\begin{eqnarray}\label{eq35}
\left|\alpha(f|_{\Lambda_{ij}})\right|=1. 
\end{eqnarray}
Note that $\varphi_{ij}(D_{4})$ contains all $6$-cycles of $\Lambda_{ij}$, and $\omega(\varphi_{ij}^{-1}(\gamma))=1$ for any $6$-cycle $\gamma$ of $\Lambda_{ij}$. Since the only non-trivial knots in $f(\Lambda_{ij})$ are trefoil knots, by (\ref{eq35}) we have that there exists exactly one $6$-cycle $\gamma_{0}$ of $\Lambda_{ij}$ such that $f(\gamma_{0})$ is a trefoil knot. Recall that there are five Hopf links by assumption. Thus we have that there exist ten $\Lambda_{ij}$'s such that each $f(\Lambda_{ij})$ contains exactly one trefoil knot. It is easy to see that any $6$-cycle of $K_{6}$ is common for exactly three $\Lambda_{ij}$'s. Therefore there are at least four trefoil knots which are contained in $f(K_{6})$. But the number of trefoil knots should be equal to two. This is a contradiction. If we assume that $(n_{6}(3_{1}),n_{3,3}(2_{1}^{2}))=(3,7)$ or $(4,9)$, we can see that a contradiction occurs for each case in the same way as above. Thus we have that 
\begin{eqnarray}\label{eq33}
(n_{6}(3_{1}),n_{3,3}(2_{1}^{2}))\neq 
(2,5),\ (3,7),\ (4,9). 
\end{eqnarray}
Clearly, (\ref{eq32}) and (\ref{eq33}) imply the desired conclusion. 
\end{proof}

\section*{Acknowledgment}

The author is grateful to Professor Masakazu Teragaito for informing him about the result in \cite{RA99}, and Professor Kouki Taniyama for his valuable comments. The author is also grateful to the referee for his or her very careful reading of the manuscript and helpful comments. 

{\normalsize
}

\end{document}